\documentclass{elsarticle}
\usepackage{amsmath,amsthm,amssymb}
\usepackage{amssymb}
\usepackage{graphicx}
\usepackage{subfigure}
\usepackage{amsmath}
\usepackage{epstopdf}
\usepackage{color}
\usepackage{multirow}
\usepackage{lineno,hyperref}
\usepackage{stmaryrd}
\allowdisplaybreaks
\numberwithin{equation}{section}
\newtheorem{theorem}{Theorem}[section]
\newtheorem{lemma}{Lemma}[section]

\newtheorem{remark}{Remark}[section]

\biboptions{sort&compress}
\modulolinenumbers[5]

\journal{Journal of \LaTeX\ Templates}

\begin{document}

\begin{frontmatter}

\title{Adaptive FEM  with optimal convergence rate for non-self-adjoint eigenvalue problems \tnoteref{mytitlenote}}
\tnotetext[mytitlenote]{Projects supported by the Foundation of Guizhou Provincial Department of Science and Technology (Qiankehe Basic Research (MS) 〔2026〕 No. 434) and the National Natural Science Foundation of China (Grant Nos. 11561014, 11761022).}

\author[mymainaddress]{Shixi Wang}
\ead{wangshixi@gznu.edu.cn}

\author[mymainaddress]{Hai Bi}
\ead{bihaimath@gznu.edu.cn}

\author[mymainaddress]{Yidu Yang\corref{mycorrespondingauthor}}
\cortext[mycorrespondingauthor]{Corresponding author}
\ead{ydyang@gznu.edu.cn}

\address[mymainaddress]{School of Mathematical Sciences, Guizhou Normal University, Guiyang 550025, China}

\begin{abstract}
In this paper, we first discuss the optimal convergence of the adaptive finite element methods for non-self-adjoint eigenvalue problems. We present new theoretical error estimators and computable error estimators 
for multiple and clustered eigenvalues with the help of the error estimators of finite element solutions
 for the corresponding source problems, and prove the equivalence between these two estimators.
We propose an adaptive algorithm for the eigenvalue cluster and demonstrate that it achieves the optimal convergence rate. 
We also provide numerical experiments to support our theoretical findings. 
\end{abstract}

\begin{keyword}
Finite element method;  Non-self-adjoint eigenvalue problems; A posteriori error estimate; Adaptive algorithm, Optimality; Eigenvalue cluster.
\end{keyword}
\end{frontmatter}

\section{Introduction}\label{sec1}
\indent Since the pioneering work of Babu\v{s}ka and Rheinboldt\,\cite{babuska1978} in 1978, the theory and algorithms of adaptive finite element methods (AFEM) have seen significant advancements; see monographs \cite{ainsworth2000,verfurth2013} and recent review papers \cite{Nochetto2012Primer,chamoin2023,bonito2024} for instance.
A posteriori error analysis and adaptive computation of eigenvalue problems have become a hot topic in the academic community (see \cite{duran1999,duran2003,larson2000,oden2003,gedicke2014,carstensen2011,boffi2019,heuveline2001,garau2009,rannacher2010,cances2020,carstensen2024adaptive,boffi2025adaptive}), particularly regarding the adaptive computation of multiple and clustered eigenvalues attracts much attention in the academic community.\\
\indent There have been several contributions to the proof of convergence and optimal rates for AFEM for self-adjoint eigenvalue problems.
In 2008, Dai~et~al.~\cite{dai2008} first proved the optimal convergence rate of AFEM for simple eigenvalues and eigenfunctions.
In 2015, Dai~et~al.~\cite{dai2015} first proposed a theoretical estimator for multiple eigenvalues and demonstrated the convergence and quasi-optimal complexity of AFEM. 
They showed that the theoretical error estimator is equivalent to a standard computable estimator on sufficiently fine mesh.
Subsequently, Gallistl~\cite{gallistl2015} studied clustered eigenvalues for the linear FEM and proved the convergence and quasi-optimal complexity of AFEM. 
This investigation has been further extended and developed.
Bonito and Demlow~\cite{bonito2016} established the convergence with optimal rate of AFEM for clustered eigenvalues using elements of arbitrary polynomial degree, Boffi~et~al.~\cite{boffi2017} developed an $h$-adaptive mixed FEM that achieves optimal convergence rates for discretizing Laplace operator eigenvalue clusters, and Gallistl~\cite{gallistl2014} analyzed a nonconforming AFEM of eigenvalue clusters of
the Laplacian and of the Stokes system.
The equivalence argument between the computable error estimator and the theoretical error estimator proposed in \cite{gallistl2015,bonito2016} is general and concise.
The analysis method in \cite{gallistl2014,boffi2017} for the optimality of AFEM is also general and easy to apply to other eigenvalue problems.
As far as the authors' best knowledge, however, there is currently no literature reporting the optimal convergence rate of AFEM for non-self-adjoint eigenvalue problems. 
The purpose of this paper is to fill the gap.\\
\indent Our research relies on the discussion of the optimal convergence rate of AFEM for the corresponding source problems.
For second-order elliptic PDEs, the important literature, see \cite{dorfler1996,mns,binev2004,stevenson07,cascon2008,feischl2014,babuska1978,ainsworth2000,verfurth2013,axioms,buffa2016,hu2018,feischl2022,veiga2023,chamoin2023,bonito2024}, provide relevant studies for adaptive Galerkin methods. \\
\indent Compared with the self-adjoint eigenvalue problems, the ascent of eigenvalues in non-self-adjoint eigenvalue problems may be greater than one, resulting in defective eigenvalues, and eigenfunctions no longer form a complete orthogonal system.
Moreover, generalized eigenfunctions of order greater than 1 appear (see pages 683 and 693 in \cite{babuska1978}), so the relation $a(u,v)=\lambda (u,v)_0$ (see (\ref{eqn:2.3})) does not hold. This poses challenges for theoretical analysis.\\
\indent Feischl~et~al.~\cite{feischl2014} proved convergence with optimal algebraic convergence rates for AFEM of general linear second-order elliptic operators.
By the argument in Yang~et~al.~\cite{yang2024}, the a posteriori error estimates of the Galerkin solution of the eigenvalue problem are attributed to the error estimates of the Galerkin solution of the corresponding source problem. 
Based on \cite{feischl2014,yang2024}, we present new theoretical error estimators and computable estimators for multiple eigenvalues and eigenvalue clusters, and we prove their equivalence using the arguments in \cite{gallistl2015,bonito2016}.  
We then propose an adaptive algorithm and prove the optimal convergence rate of AFEM for the non-self-adjoint eigenvalue problems by following the lines of the proof of the optimal convergence rate of AFEM in \cite{gallistl2015,boffi2017} and using the arguments as in \cite{cascon2008,feischl2014}.\\
\indent The outline of the rest of the paper is as follows. Section \ref{sec2} is dedicated to the preliminaries necessary for our work. 
In section \ref{sec3}, we establish the equivalence of the computable error estimator and the theoretical error estimator. 
Section \ref{sec4} analyzes the optimality of AFEM. Finally, numerical examples are presented in section \ref{sec5}.\\
\indent Throughout this paper, the letter $C$ (with or without subscripts) denotes a generic positive constant
that may depend on domain $\Omega$ and the initial mesh $\mathcal{T}_{0}$ 
but not on the mesh $\mathcal{T}$ which is a refinement of $\mathcal{T}_{0}$ 
and not on the eigenvalue cluster of interest.
We use the notation $a \lesssim b$ to indicate that $a \le Cb$,
and write $a \simeq b$ when $a \lesssim b$ and $b\lesssim a$ for simplicity.

\section{Preliminaries}\label{sec2}
Let $\Omega \subset \mathbb{R}^d (d = 2,3)$ be a polyhedral bounded domain. 
For $\omega \subset \Omega$, we denote by $H^1(\omega)$ the usual Sobolev space endowed with the norm
$
\|v\|^2_{1, \omega}:=\|v\|_{L^2(\omega)}^2+\|\nabla v\|_{L^2(\omega)}^2
$.
Denote $\|\cdot\|_1=\|\cdot\|_{1, \Omega},\|\cdot\|_{0, \omega}=\|\cdot\|_{L^2(\omega)}$ and $\|\cdot\|_0=\|\cdot\|_{0, \Omega}$.
Let $V=H_{0}^{1}(\Omega)=\left\{v: v \in H^1(\Omega),\left.v\right|_{\partial \Omega}=0 \right\}$.
Let $(u, v)_0  =\int_{\Omega} u \overline{v} \mathrm{~d}x$ denote the $L^2(\Omega)$ inner product.\\
\indent Consider the eigenvalue problem for the following non-self-adjoint elliptic operator
\begin{align}
\label{eqn:2.1}
\mathcal{L} u := -\nabla \cdot(\boldsymbol{A} \nabla u)+\boldsymbol{b} \cdot \nabla u+cu&=\lambda u \quad \text { in } \Omega, \\
\label{eqn:2.2}
 u&=0 \quad \text { on } \partial \Omega,
\end{align}
where $\boldsymbol{A}=(a_{ij}(x))_{d\times d}$ is a symmetric matrix, 
$\boldsymbol{b}=\boldsymbol{b}(x)\in$ $L^{\infty}(\Omega)^d$ is a vector and $c=c(x) \in L^{\infty}(\Omega)$ is a scalar. 
For the sake of exposition, we assume that $\boldsymbol{A}$, $\boldsymbol{b}$ and $c$ are piecewise sufficiently smooth functions.\\
\indent The weak formulation of the problem (\ref{eqn:2.1})--(\ref{eqn:2.2}) reads as:
Find $\lambda \in \mathbb{C}$, $0\not= u \in V$, such that
\begin{eqnarray}\label{eqn:2.3}
a(u,v)=\lambda (u,v)_0\quad \forall v \in V,
\end{eqnarray}
where 
\begin{equation*}
\begin{aligned}
a(u, v) & :=(\boldsymbol{A}\nabla u \cdot \nabla v)_0 + (\boldsymbol{b} \cdot \nabla u + c u,v)_0 \quad \text{ for } u,v\in V. \\
\end{aligned}
\end{equation*}

The sesquilinear form $a(\cdot, \cdot)$ is bounded with
\[
|a(u, v)| \lesssim \|\nabla u\|_{0}\|\nabla v\|_{0}  \quad \forall u, v \in V.
\]

We assume that the coefficients satisfy the conditions of Proposition 31.8 in \cite{ern2021}. Then from Proposition 31.8 in \cite{ern2021}, $a(\cdot, \cdot)$ is V-coercive:
\begin{equation}\label{coercive}
a(u, u) \gtrsim \|\nabla u\|_{0}^2 \quad \forall u \in V.
\end{equation}

We define the quasi-norm $\|v\|_V:=a(v,v)^{1/2}$ and we have
$$\|v\|_V  \simeq \|v\|_1\quad \forall v\in V.$$

\indent The source problem associated with (\ref{eqn:2.3}) is as follows: Given $f\in L^2(\Omega)$, find $w\in V$  satisfying
\begin{eqnarray}\label{eqn:2.4}
a(w,v)=(f,v)_0\quad \forall v \in V.
\end{eqnarray}

\indent Given an initial simplicial mesh $\mathcal{T}_{0}$ which is regular as in \cite{ciarlet1978},
the mesh sequence $\{\mathcal{T}_{\ell}\}_{\ell\in\mathbb N}$ with mesh size $h_\ell$ is generated by using the newest vertex bisection (NVB) refinement strategy.
Let $$\mathbb T = \{\mathcal T_\ell: \mathcal T_\ell \text{ is a refinement of the initial mesh } \mathcal T_{0}\}$$ be the set of meshes.
For $\ell\in\mathbb N$, $\mathcal T_{\ell}\in \mathbb T$ is regular and $\gamma$-shape regular, i.e.,
\[\gamma^{-1}|T|^{1/d}\leq \mathrm{diam}(T) \leq \gamma|T|^{1/d},\]
where $|T|$ and $\mathrm{diam}(T)$ denote the $d$-dimensional volume  and diameter of $T\in\mathcal T_{\ell}$, respectively.\\
\indent For $p \geq 1$, we denote the set of the piecewise polynomials of degree $\leq p$ on $\mathcal{T}_\ell$ by $\mathcal{P}^p(\mathcal{T}_\ell)$.
Let $V_{\ell}\subset V$ be the conforming Lagrange finite element space defined on $\mathcal T_\ell\in \mathbb T$.
 $V_\ell:= \mathcal{P}^p (\mathcal{T}_\ell) \cap V$.\\
\indent The discrete form for (\ref{eqn:2.3}) is given by:
Find $\lambda_{\ell} \in \mathbb{C}$, $0\not= u_{\ell} \in V_{\ell}$, such that
\begin{eqnarray}
\label{eqn:2.5}
a(u_{\ell},v)=\lambda_{\ell} (u_{\ell},v)_0\quad \forall v \in V_{\ell}.
\end{eqnarray}
\indent  The discrete form for (\ref{eqn:2.4}) is given by: Find $w_{\ell}\in V_{\ell}$, such that
\begin{eqnarray}
    \label{eqn:2.6}
    a(w_{\ell},v)=(f,v)_0\quad \forall v \in V_{\ell}.
\end{eqnarray}

\indent Since $a(w,v)$ is coercive, $a(w,v)$ satisfies the Lax-Milgram lemma, and (\ref{eqn:2.4}) and (\ref{eqn:2.6}) are well-posed. 
Then, we can define the solution operators $K: L^2(\Omega)\to V$ and $K_\ell: L^2(\Omega)\to V_{\ell}$ by
\begin{align}
\label{eqn:2.7a}
&a(Kf,v)=(f,v)_0\quad \forall v \in V,\\
\label{eqn:2.7}
&a(K_\ell f,v)=(f,v)_0\quad \forall v \in V_{\ell}.
\end{align}
And there holds
\begin{eqnarray}
\label{eqn:2.8}
\|Kf\|_{V}\lesssim\|f\|_{0}, \quad \|K_\ell f\|_{V}\lesssim\|f\|_{0}\quad \forall f \in L^2(\Omega).
\end{eqnarray}

Let $G_{\ell}: V \rightarrow V_{\ell}$ be the Ritz projection as follows
\begin{align}
\label{eqn:3.6a}
&a(G_{\ell} u,v) = a(u, v) \quad \forall v \in V_{\ell}.
\end{align}
Then $K_\ell = G_\ell \circ K$ and there hold
\begin{eqnarray}
\label{eqn:2.9}
\|K - K_\ell\|_{V}\to 0\quad\text{as } h_{\ell} \to 0\quad\text{and}\quad\|K - K_\ell\|_{0}\to 0\quad\text{as } h_{\ell} \to 0.
\end{eqnarray}

\indent Consider the adjoint eigenvalue problems:
Find $\lambda^{*} \in \mathbb{C}$, $0\not= u^{*} \in V$, such that
\begin{eqnarray}
\label{eqn:2.10}
a(v,u^{*})=\overline{\lambda^{*}} (v,u^{*})_0\quad \forall v \in V.
\end{eqnarray}
Find $\lambda_{\ell}^{*} \in \mathbb{C}$, $0\neq u_{\ell}^{*} \in V_{\ell}$, such that
\begin{eqnarray}
\label{eqn:2.11}
a(v,u_{\ell}^{*})=\overline{\lambda_{\ell}^{*}} (v,u_{\ell}^{*})_0\quad \forall v \in V_{\ell}.
\end{eqnarray}
The source problems associated with (\ref{eqn:2.10}) and (\ref{eqn:2.11}) are as follows, respectively:
Given $g\in L^2(\Omega)$, find $w^{*} \in V$ satisfying
\begin{eqnarray}
\label{eqn:2.12}
a(v,w^{*})=(v,g)_0\quad \forall v \in V.
\end{eqnarray}
Given $g\in L^2(\Omega)$, find $w_{\ell}^{*} \in V_{\ell}$ satisfying
\begin{eqnarray}
\label{eqn:2.13}
a(v,w_{\ell}^{*})=(v,g)_0\quad \forall v \in V_{\ell}.
\end{eqnarray}

\indent Let $K_{*}$ and $K_{\ell*}$ be the solution operators of (\ref{eqn:2.12}) and (\ref{eqn:2.13}), respectively.
Let $G_{\ell*}:H_0^1(\Omega) \rightarrow V_{\ell}$ be the Ritz projection.
Then  $K_{\ell*} = G_{\ell*} \circ K_*$  and it is valid that
\begin{eqnarray}
\label{eqn:2.14}
\|K_{*} - K_{\ell*}\|_{V}\to 0\quad\text{as } h_{\ell} \to 0\quad\text{and}\quad\|K_{*} - K_{\ell*}\|_{0}\to 0\quad\text{as } h_{\ell} \to 0.
\end{eqnarray}

(\ref{eqn:2.9})  and (\ref{eqn:2.14}) show that the error estimate results in spectral approximation theory hold when $h$ is sufficiently small.

From \cite{grisvard2011,bernardi2000}, we have the following regularity estimate for the source problem (\ref{eqn:2.4}) and the same result for the adjoint problem (\ref{eqn:2.12}) also holds. There exists a constant $r>0$ such that
\begin{equation}\label{eqn: regularity}
\| w \|_{1+r} \lesssim \|f\|_0 \quad\forall f\in L^2(\Omega).
\end{equation}

\indent Let $\{\lambda_{j}\}$ and $\{\lambda_{j,\ell}\}$ be enumerations of the eigenvalues of (\ref{eqn:2.3}) and (\ref{eqn:2.5}) according to their modulus from small to large with each repeated according to their multiplicities. 
Denote the index set ${J}=\{n+1,\ldots,n+N\}$.
Let $\widehat{\lambda} = \frac{1}{N}\sum_{j\in {J}}\lambda_{j}$ and $\widehat{\lambda}_{\ell}=\frac{1}{N}\sum_{j\in {J}}\lambda_{j,\ell}$. 
Let $\{\lambda_{j}\}_{j\in{J}}$ be an eigenvalue cluster, i.e., $\lambda_n \neq \lambda_{n+1}, \lambda_{n+2}, ...  ,\lambda_{n+N}\neq \lambda_{n+N+1}$, and denote the set of reciprocals of these $N$ eigenvalues by $\sigma$. 
We assume the separation
$M_J := \sup _{\mathcal{T}_\ell \in \mathbb{T}} \max_{j \in \{1, \ldots, \operatorname{dim} V_\ell \} \backslash J} \max_{k \in J} \frac{|\lambda_k|}{|\lambda_{j,\ell}-\lambda_k|} < \infty$.
Let $\Gamma$ be a Jordan closed curve and $\Gamma\subset \rho(K)$, and let $\sigma$ be enclosed by $\Gamma$ and $\Gamma$ enclose no other eigenvalues of $K$.  
Define the spectral projection $ E=\frac{-1}{2\pi\mathrm{i}}\int_{\Gamma}(z-K)^{-1}\mathrm{d}z$ associated with $K$ and $\sigma$, and define $E_{\ell}=\frac{-1}{2\pi\mathrm{i}}\int_{\Gamma}(z-K_\ell)^{-1}\mathrm{d}z$ associated with $K_\ell$ and the eigenvalues of $K_\ell$ which lie in $\Gamma$.\\
\indent We assume that the initial mesh size $h_0$ is sufficiently small such that $\dim R(E) =\dim R(E_{\ell}) $ and $\dim R(E_{*}) =\dim R(E_{\ell*})$ (c.f. page 684 in \cite{babuska1991}), where $R$ denotes the range.
Denote that $\widehat{R}(E)=\{v\in R(E): \|v\|_{V}=1\}$ and $\widehat{R}(E_{\ell})=\{v\in R(E_{\ell}): \|v\|_{V}=1\}$, and denote $\widehat{R}(E_{*})=\{v\in R(E_{*}): \|v\|_{V}=1\}$, and $\widehat{R}(E_{\ell*})=\{v\in R(E_{\ell*}): \|v\|_{V}=1\}$.
We call $R(E)$ and $R(E_{*})$ as eigenspaces, and call $R(E_{\ell})$ and $R(E_{\ell*})$ as approximate eigenspaces.\\
\indent For two closed subspaces $A$ and $B$ of $V$, we define the gap $\widehat{\delta}(A,B)$ between $A$ and $B$ in the sense of norm $\|\cdot\|_{V}$ as
\begin{equation*}
 \delta(A,B) = \sup_{\phi\in A,\|\phi\|_{V}=1}\inf_{\psi\in B}\|\phi-\psi\|_{V},\quad
\widehat{\delta}(A,B) = \max\{\delta(A,B),\delta(B,A)\}.
\end{equation*}
\indent Define 
\begin{align*}
&\varepsilon_{\ell} = \sup\limits_{u\in \widehat{R}(E)}\inf\limits_{\chi\in V_{\ell}}\|u-\chi\|_{V},\quad
\varepsilon_{\ell}^{*} = \sup\limits_{u\in \widehat{R}(E_{*})}\inf\limits_{\chi\in V_{\ell}}\|u-\chi\|_{V}.
\end{align*}

\indent We assume that the following conditions (C1) or (C2) hold.\\
\indent (C1). $\{u_{j,\ell}\}_{j\in J}$ and $\{u^*_{j,\ell}\}_{j\in J}$ are an orthonormal basis in $R(E_\ell)$ and $R(E_{\ell*})$ with respect to $(\cdot,\cdot)_0$, respectively.

\indent (C2). $\|u_{j,\ell}\|_0 = 1$, $\|u_{j,\ell}^\ast\|_0 = 1$, and $(u_{i,\ell}, u_{j,\ell}^\ast)_0 = 0$ ($i \neq j$), and there exists a positive constant $\tau$ independent of $h$, such that $|(u_{j,\ell}, u_{j,\ell}^\ast)_0| \ge \tau$ for all $j \in J$.

\begin{remark}
When we obtain $\{u_{j,\ell}\}_{j\in J}\subset R(E_\ell)$ and
$\{u^*_{j,\ell}\}_{j\in J}\subset R(E_{\ell*})$ by solving (\ref{eqn:2.5}) and (\ref{eqn:2.10}), respectively,
if the condition (C1) does not hold for  $\{{u}_{j,\ell}\}_{j\in J}$
and $\{u^*_{j,\ell}\}_{j\in J}$, the Gram--Schmidt process changes 
$\{u_{j,\ell}\}_{j\in J}$ and $\{u^*_{j,\ell}\}_{j\in J}$ 
to orthonormal bases (still denoted by)
 $\{u_{j,\ell}\}_{j\in J}$ and $\{u^*_{j,\ell}\}_{j\in J}$
in $R(E_\ell)$ and $R(E_{\ell*})$ with respect to $(\cdot,\cdot)_0$, respectively. Then $\{u_{j,\ell}\}_{j\in J}$ and $\{u^*_{j,\ell}\}_{j\in J}$ satisfy condition (C1).

When the eigenvalue cluster $\{\lambda_{j,\ell}\}_{j\in J}$ is composed of $N$ simple eigenvalues, then using the arguments of Lemma~5 in \cite{Wang2025posteriori}, we can deduce that the corresponding eigenfunctions $\{u_{j,\ell}\}_{j\in J}$ and $\{u^*_{j,\ell}\}_{j\in J}$ satisfy condition (C2).
\end{remark}

From Theorem 3.5 of \cite{yang2024}, we have the following lemma. 

\begin{lemma}\label{lem:2.3}
Assume that the condition (C1) or (C2) is valid. Then when the initial mesh size $h_0$ is small enough, there hold
\begin{align}
\label{eqn:2.20}
&\widehat{\delta}(R(E),R(E_{\ell}))\lesssim \sum\limits_{j\in{J}}\|({K} - K_\ell)u_{j,\ell}\|_{V} \lesssim N\widehat{\delta}(R(E),R(E_{\ell})),\\
\label{eqn:2.21}
&\widehat{\delta}(R(E_{*}),R(E_{\ell*}))\lesssim \sum\limits_{j\in{J}}\|(K_{*} - K_{\ell*})u_{j,\ell}^{*}\|_{V} \lesssim N\widehat{\delta}(R(E_{*}),R(E_{\ell*})),\\
\label{eqn:2.22}
&|\widehat{\lambda}-\widehat{\lambda}_{\ell}| \lesssim N^2 \varepsilon_\ell\varepsilon_\ell^*\lesssim N^2\sum\limits_{j\in{J}}\left(\|({K} - K_\ell)u_{j,\ell}\|_{V}^{2} + \|(K_{*} - K_{\ell*})u_{j,\ell}^{*}\|_{V}^{2}\right).
\end{align}
When the eigenvalue cluster $\{\lambda_{j}\}_{j\in J}$ is composed of non-defective eigenvalues, i.e., eigenvalues for which the algebraic multiplicity equals the geometric multiplicity, then for $k\in J$,
\begin{equation}
\label{eqn:2.22a}
|\lambda_{k}-\lambda_{k,\ell}|\lesssim N^2 \varepsilon_\ell\varepsilon_\ell^*\lesssim N^2\sum\limits_{j\in{J}}(\|({K} - K_\ell)u_{j,\ell}\|_{V}^{2} + \|(K_{*} - K_{\ell*})u_{j,\ell}^{*}\|_{V}^{2}).
\end{equation} 
\end{lemma}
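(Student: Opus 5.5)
We will derive the lemma from the Babu\v{s}ka--Osborn spectral approximation theory applied to the compact operators $K$ and $K_\ell=G_\ell\circ K$ on $V$. The convergence (\ref{eqn:2.9}) and (\ref{eqn:2.14}) makes that theory available once $h_0$ is small. Throughout, we use that the eigenvalues of $K$ in $\sigma$ are the reciprocals $\mu_j=1/\lambda_j$, which are bounded away from $0$.

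\textbf{Upper bound in (\ref{eqn:2.20}).} The key identity is $u_{j,\ell}=\lambda_{j,\ell}K_\ell u_{j,\ell}$ for eigenpairs. In the defective case we instead use $R(E_\ell)=\mathrm{span}\{u_{j,\ell}\}$ together with the matrix representation of $K_\ell|_{R(E_\ell)}$, which has spectrum close to $\sigma$. Since $E$ commutes with $K$ and $E_\ell u=u$ on $R(E_\ell)$, the resolvent representation gives
\[
(E-E_\ell)u=\frac{-1}{2\pi\mathrm{i}}\int_\Gamma (z-K)^{-1}(K-K_\ell)(z-K_\ell)^{-1}u\,\mathrm{d}z .
\]
For $u\in R(E_\ell)$, the factor $(z-K_\ell)^{-1}u$ stays in $R(E_\ell)$. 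Expanding it in the basis $\{u_{j,\ell}\}$ with uniformly bounded coefficients gives
\[
\|u-Eu\|_V\lesssim \sum_j\|(K-K_\ell)u_{j,\ell}\|_V\,\|u\|_V .
\]
Bounding the coefficients by $\|u\|_V$ is where (C1) or (C2) enters: the Gram matrix $((u_{i,\ell},u_{j,\ell})_0)$ is the identity, or is controlled via $\tau$ and duality with $u^*_{j,\ell}$, and $\|\cdot\|_0\lesssim\|\cdot\|_V$ is used. This bounds $\delta(R(E_\ell),R(E))$. The reverse gap $\delta(R(E),R(E_\ell))$ then follows from equal dimensions, by the standard gap symmetry for nearby equidimensional subspaces.

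\textbf{Lower bound in (\ref{eqn:2.20}).} Let $u$ be the element of $R(E)$ closest to $u_{j,\ell}$. We write $(K-K_\ell)u_{j,\ell}=(I-G_\ell)K u_{j,\ell}$. Since $I-G_\ell$ is bounded in $V$ and annihilates $V_\ell$, this equals $(I-G_\ell)K(u_{j,\ell}-u)+(I-G_\ell)Ku$. The first term is $\lesssim\|u_{j,\ell}-u\|_V$. For the second, $Ku\in R(E)$ with $\|Ku\|_V\lesssim\|u\|_V$, so it is bounded by $\lesssim\delta(R(E),V_\ell)\lesssim\widehat\delta(R(E),R(E_\ell))$. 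Summing over $j$ gives the factor $N$. The adjoint estimate (\ref{eqn:2.21}) is identical, using $K_*$, $G_{\ell*}$ and $E_*$.

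\textbf{Eigenvalue estimates (\ref{eqn:2.22}) and (\ref{eqn:2.22a}).} We use the trace identity for the mean, $\widehat\lambda^{-1}$-type averages $\frac1N\mathrm{tr}(K|_{R(E)})$, compared via the Babu\v{s}ka--Osborn bound
\[
\Big|\frac1N\sum(\mu_j-\mu_{j,\ell})\Big|\lesssim \frac1N\sum_{i,k}|((K-K_\ell)\phi_i,\phi_k^*)|+\|(K-K_\ell)|_{R(E)}\|\,\|(K_*-K_{\ell*})|_{R(E_*)}\|,
\]
where $\{\phi_i\}$ and $\{\phi^*_k\}$ are dual bases of $R(E)$ and $R(E_*)$. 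Galerkin orthogonality gives
\[
((K-K_\ell)\phi,\phi^*)_0=a((K-K_\ell)\phi,(K_*-K_{\ell*})\phi^*)\le \|(I-G_\ell)K\phi\|_V\,\|(I-G_{\ell*})K_*\phi^*\|_V\lesssim\varepsilon_\ell\varepsilon_\ell^* .
\]
The duality normalisation of the bases contributes at most $N^2$, and converting from $\mu$ to $\lambda=1/\mu$ only costs constants. In the non-defective case the ascent is one, so the same bound holds for each individual $\lambda_k$ rather than only for the mean.

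The final inequality in (\ref{eqn:2.22}) and (\ref{eqn:2.22a}) uses $\varepsilon_\ell\le\delta(R(E),V_\ell)\lesssim\sum_j\|(K-K_\ell)u_{j,\ell}\|_V$ from (\ref{eqn:2.20}), the analogous bound from (\ref{eqn:2.21}), and $ab\le a^2+b^2$.

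\textbf{Main obstacle.} We expect the hardest step to be the coefficient bound under (C2), i.e.\ controlling $\|\sum c_ju_{j,\ell}\|_V$ from below by $\max|c_j|$ uniformly in $h$. We would attack it by pairing with $u^*_{i,\ell}$: biorthogonality gives $c_i=(u,u^*_{i,\ell})_0/(u_{i,\ell},u^*_{i,\ell})_0$, so $|c_i|\le\tau^{-1}\|u\|_0$.
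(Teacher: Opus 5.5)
The paper gives no proof of its own here: it imports the lemma from Theorem~3.5 of \cite{yang2024}, which, like your argument, rests on Babu\v{s}ka--Osborn spectral approximation theory. Your treatment of \eqref{eqn:2.20} and \eqref{eqn:2.21} is sound. The second resolvent identity on the $K_\ell$-invariant space $R(E_\ell)$, the coefficient bounds from (C1) (or from biorthogonality and $\tau$ under (C2)), gap symmetry for equidimensional subspaces, and the C\'ea-type splitting for the lower bound all go through.

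The gap is in the eigenvalue part, at the claim that converting from $\mu$ to $\lambda=1/\mu$ only costs constants. Your displayed bound controls only a trace, $\frac1N\sum_{j\in J}(\mu_j-\mu_{j,\ell})$. With $d_j:=\mu_{j,\ell}-\mu_j$, however,
\[
\widehat\lambda_\ell-\widehat\lambda=-\frac1N\sum_{j\in J}\frac{d_j}{\mu_j\,\mu_{j,\ell}},
\]
which is a weighted, nonlinear function of the individual errors, so a small unweighted mean of the $d_j$ does not control it. The weights differ between distinct eigenvalues of the cluster. Even for a single defective eigenvalue $\mu$ of ascent $\alpha$, the individual $d_j$ are only $O((\varepsilon_\ell\varepsilon_\ell^*)^{1/\alpha})$. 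Along your route the second-order term $\frac1N\sum_j d_j^2/\mu^3$ is then only $O((\varepsilon_\ell\varepsilon_\ell^*)^{2/\alpha})$, which is too weak once $\alpha\ge 3$.

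The missing idea is to estimate the whole $N\times N$ matrix rather than its trace. Put $\hat K:=K|_{R(E)}$ and $\hat K_\ell:=(E_\ell|_{R(E)})^{-1}K_\ell E_\ell|_{R(E)}$, which is similar to $K_\ell|_{R(E_\ell)}$. The Babu\v{s}ka--Osborn argument shows that every entry $((\hat K-\hat K_\ell)\phi_i,\phi_k^*)_0$ differs from $((K-K_\ell)\phi_i,\phi_k^*)_0$ by $O(\varepsilon_\ell\varepsilon_\ell^*)$. Together with your Galerkin-orthogonality estimate this gives $\|\hat K-\hat K_\ell\|\lesssim\varepsilon_\ell\varepsilon_\ell^*$ as an operator on $R(E)$. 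Since $0$ lies outside $\Gamma$, $\hat K^{-1}$ and $\hat K_\ell^{-1}$ are uniformly bounded, and
\[
N(\widehat\lambda_\ell-\widehat\lambda)=\mathrm{tr}\big(\hat K_\ell^{-1}-\hat K^{-1}\big)=\mathrm{tr}\big(\hat K_\ell^{-1}(\hat K-\hat K_\ell)\hat K^{-1}\big)
\]
gives \eqref{eqn:2.22}. The same operator bound, combined with Bauer--Fike for the diagonalizable $\hat K$, gives \eqref{eqn:2.22a}. There the conversion $\mu\mapsto 1/\mu$ really is harmless, because it is applied to individual eigenvalues.

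A smaller bookkeeping point concerns the powers of $N$. Write $a_j:=\|(K-K_\ell)u_{j,\ell}\|_V$ and $b_j:=\|(K_*-K_{\ell*})u^*_{j,\ell}\|_V$. Your final step uses $\varepsilon_\ell\lesssim\sum_ja_j$, $\varepsilon_\ell^*\lesssim\sum_jb_j$ and $ab\le a^2+b^2$. Since $(\sum_ja_j)^2\le N\sum_ja_j^2$, this only yields $N^3\sum_j(a_j^2+b_j^2)$. Under (C1) you can avoid this loss by applying Cauchy--Schwarz and Parseval to the coefficients in your upper-bound argument, which gives $\widehat\delta(R(E),R(E_\ell))\lesssim(\sum_ja_j^2)^{1/2}$.
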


\begin{remark}
Note that $K u_{j,\ell}$ and $K_{\ell} u_{j,\ell}$ are the solutions of equations (\ref{eqn:2.5}) and (\ref{eqn:2.10}), respectively, with the right-hand side term $f = u_{j,\ell}$. Lemma 2.2 reveals that the a posteriori errors of the approximate eigenvalue and approximate eigenspace can be derived from the a posteriori error of the approximate solution to the associated source problem. To compute the set $\{K_\ell u_{j,\ell}\}_{j \in J}$, it is necessary to solve the linear system (\ref{eqn:2.10}) for each $f = u_{j,\ell}$ ($j \in J$). Notably, these $q$ linear systems share an same coefficient matrix but differ in their right-hand sides.

In most cases, the eigenvalue clusters $\{\lambda_{j}\}_{j\in J}$ will be split into $N$ simple eigenvalues $\{\lambda_{j,\ell}\}_{j\in J}$ under discretization. In this case,  let $u_{j,\ell}$ denote the eigenfunction associated with $\lambda_{j,\ell}$ ($j \in J$). Then there holds
\begin{equation}
K_{\ell} u_{j,\ell} = \lambda_{j,\ell}^{-1} u_{j,\ell},
\end{equation}
which is precisely the solution to the linear system (\ref{eqn:2.10}) with $u_{j,\ell}$ as the right-hand side.
\end{remark}

We split $\mathcal{E}_\ell := \mathcal{E}^0_\ell \cup\mathcal{E}^{\partial}_\ell$,
where $\mathcal{E}^0_\ell$ and $\mathcal{E}^{\partial}_\ell$ denote the set of internal faces and the set of boundary faces of partition $\mathcal T_\ell$, respectively.
Let
\begin{equation*}
[\![ \boldsymbol{A} \nabla w \cdot \nu ]\!]|_e = \boldsymbol{A} \nabla w|_{T^+} \cdot \nu^{+}+\boldsymbol{A} \nabla w|_{T^-} \cdot \nu^{-},
\end{equation*}
where $e$ is the common face of elements $T^{+}$and $T^{-}$with unit outward normals $\nu^{+}$and $\nu^{-}$, respectively, and $\nu=\nu^{-}$. \\
\indent Referring to the work of Feischl~et~al.~\cite{feischl2014}, the local error estimator for the source problem (\ref{eqn:2.6}) is  
\begin{align*}
    & \eta_\ell(f, T)^2 :=  h_T^2 \|R_T (f, w_\ell )\|_{0, T}^2 + \sum_{e \in \mathcal{E}_{\ell}, e \subset \partial T} h_e \|J_e (w_\ell) \|_{0, e}^2,
\end{align*}
where the element residual $R_T(f, w_\ell)$ and the jump residual $J_e(w_\ell)$ are defined as follows
\begin{align*}
    R_T (f, w_\ell) & = f - \mathcal{L} w_\ell \\
    & =f + \nabla \cdot (\boldsymbol{A} \nabla w_\ell) - \boldsymbol{b} \cdot \nabla w_\ell - c w_\ell \quad \text { for } T \in \mathcal T_\ell, \\
    J_e(w_\ell)= & 
    \begin{cases}
        [\![ \boldsymbol{A} \nabla  w_\ell \cdot \nu ]\!] & \text { if } e \in \mathcal{E}^0_\ell, \\
        0 & \text { if } e \in \mathcal{E}^{\partial}_\ell.
    \end{cases}
\end{align*}
And the local error estimator for the source problem (\ref{eqn:2.6}) is defined as 
\begin{align*}
    & \eta^*_\ell(g, T)^2 :=  h_T^2 \|R^*_T (g, w^*_\ell )\|_{0, T}^2 
    + \sum_{e \in \mathcal E_\ell, e \subset \partial T} h_e \|J^*_e ( w^*_\ell) \|_{0, e}^2,
\end{align*}
where the element residual $R_T^*(g, w_\ell^*)$ and the jump residual $J_e^*(w_\ell^*)$ are defined as follows
\begin{align*}
    R_T^*(g, w^*_\ell) 
    & = g - \mathcal{L}^* w^*_\ell \\
    &= g + \nabla \cdot(\overline{\boldsymbol{A}} \nabla w^*_\ell) + \nabla \cdot(\overline{\boldsymbol{b}} w^*_\ell) - \overline{c} w^*_\ell \\
    & = g +\nabla \cdot(\overline{\boldsymbol{A}} \nabla w^*_\ell) + \overline{\boldsymbol{b}} \cdot \nabla w^*_\ell + (\nabla \cdot \overline{\boldsymbol{b}} - \overline{c}) w^*_\ell \quad \text { for } T \in \mathcal{T}_\ell, \\
    J_e^*(w_\ell^*) = & 
    \begin{cases}
        [\![ \overline{\boldsymbol{A}} \nabla w^*_\ell \cdot \nu ]\!] & \text { if } e \in \mathcal{E}^0_{\ell}, \\
        0 & \text { if } e \in \mathcal{E}^{\partial}_{\ell}.
    \end{cases}
\end{align*}

For a given $\mathcal M \subseteq \mathcal T_\ell$, we define the error estimators $\eta_\ell(f, \mathcal M)$ and $\eta^*_\ell(g, \mathcal M)$ as 
\begin{align*}
\eta_\ell(f, \mathcal M)^2 := \sum_{T\in\mathcal M}\eta_\ell(f, T)^2 \text{ and } \eta^*_\ell(g, \mathcal M)^2 := \sum_{T\in\mathcal M}\eta^*_\ell(g, T)^2.
\end{align*}
As shown in \cite{ainsworth2000,verfurth2013} and Chapter 34 of \cite{ern2021}, the error estimators are reliable and efficient; see also \cite{feischl2014}.
\begin{lemma}\label{lem:2.2}
Assume that the initial mesh size $h_0$ is small enough. Then for all regular triangulations $\mathcal{T}_\ell$ and corresponding solutions $w_\ell$ and $w^*_\ell$ of \eqref{eqn:2.6} and \eqref{eqn:2.13}, there hold
\begin{align}
\label{eqn:2.24a}
&\|w - w_\ell\|_V \lesssim \eta_\ell(f, \mathcal T_\ell),\\
\label{eqn:2.24b}
&\|w^* - w^*_\ell\|_V \lesssim \eta^*_\ell(g, \mathcal T_\ell),\\
\label{eqn:2.24c}
 &\eta_\ell(f, \mathcal T_\ell) \lesssim \|w - w_\ell\|_V + \mathrm{osc}_\ell(f, w_\ell),\\
 \label{eqn:2.24d}
 &\eta^*_\ell(g, \mathcal T_\ell) \lesssim \|w^* - w^*_\ell\|_V + \mathrm{osc}^*_\ell(g, w_\ell^*),
\end{align}
where oscillation terms
\begin{align*}
\mathrm{osc}_\ell(f, w_\ell)^2 := & \sum_{T\in\mathcal T_\ell}\bigg( h^2_T \|(1-\Pi_\ell^{2p-1})(\mathcal{L}(w_\ell)-f)\|^2_{0,T}\\ 
&\quad +\sum_{e \in \mathcal{E}^0_{\ell}, e \subset \partial T} h_e \|(1-\Pi_\ell^{2p-1})[\![ \boldsymbol{A} \nabla w_\ell \cdot \nu  ]\!]\|^2_{0,e} \bigg),\\
\mathrm{osc}^*_\ell(g, w_\ell^*)^2 := & \sum_{T\in\mathcal T_\ell}\bigg( h^2_T \|(1-\Pi_\ell^{2p-1})(\mathcal{L}^*(w_\ell^*)-g)\|^2_{0,T}\\ 
&\quad +\sum_{e \in \mathcal{E}^0_{\ell}, e \subset \partial T} h_e \|(1-\Pi_\ell^{2p-1})[\![ \overline{\boldsymbol{A}} \nabla w_\ell^* \cdot \nu  ]\!]\|^2_{0,e} \bigg),
\end{align*}  
with the $L^2$-orthogonal projection $\Pi_\ell^{q}:\,L^2(\Omega)\to\mathcal{P}^{q}(\mathcal T_\ell)$.
\end{lemma}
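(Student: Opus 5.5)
We prove the four bounds of Lemma~\ref{lem:2.2} with the standard residual-based technique, treating the primal problem \eqref{eqn:2.6} in detail; the adjoint estimates \eqref{eqn:2.24b} and \eqref{eqn:2.24d} follow verbatim by replacing $a(v,w)$ with $a(w,v)$, i.e.\ $\mathcal L$ with $\mathcal L^*$.

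\textbf{Reliability \eqref{eqn:2.24a}.} Set $e_\ell:=w-w_\ell\in V$. The coercivity \eqref{coercive}, together with $\|\cdot\|_V\simeq\|\cdot\|_1$, gives $\|e_\ell\|_V^2\lesssim |a(e_\ell,e_\ell)|$. Galerkin orthogonality $a(e_\ell,v_\ell)=0$ for all $v_\ell\in V_\ell$ lets us write, with $v=e_\ell$ and a Scott--Zhang quasi-interpolant $I_\ell v\in V_\ell$,
\[
a(e_\ell,v)=a(e_\ell,v-I_\ell v)=\sum_{T}\bigl(R_T(f,w_\ell),v-I_\ell v\bigr)_{0,T}-\sum_{e\in\mathcal E^0_\ell}\bigl(J_e(w_\ell),v-I_\ell v\bigr)_{0,e}.
\]
This identity comes from elementwise integration by parts of $(\boldsymbol A\nabla w_\ell,\nabla(\cdot))_0$; the lower-order terms $\boldsymbol b\cdot\nabla w_\ell+cw_\ell$ are simply absorbed into $R_T$. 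The local approximation estimates
\[
\|v-I_\ell v\|_{0,T}\lesssim h_T\|\nabla v\|_{0,\omega_T},\qquad \|v-I_\ell v\|_{0,e}\lesssim h_e^{1/2}\|\nabla v\|_{0,\omega_e},
\]
combined with Cauchy--Schwarz and the finite overlap of patches (which follows from $\gamma$-shape regularity), yield $\|e_\ell\|_V^2\lesssim \eta_\ell(f,\mathcal T_\ell)\|e_\ell\|_V$. Dividing by $\|e_\ell\|_V$ gives \eqref{eqn:2.24a}. This route uses coercivity directly, so no duality argument and no smallness of $h_0$ is needed. If only a G\aa rding inequality were available instead, one would absorb the $L^2$ term through an Aubin--Nitsche argument based on \eqref{eqn: regularity}, which is where the assumption ``$h_0$ small'' would enter.

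\textbf{Efficiency \eqref{eqn:2.24c}.} We use Verf\"urth's bubble-function technique. Let $\bar R_T:=\Pi_\ell^{2p-1}R_T$ and $\bar J_e:=\Pi_\ell^{2p-1}J_e$. Testing the residual identity with $v=b_T\bar R_T$ (where $b_T$ is the element bubble) and using norm equivalence on polynomials together with an inverse estimate gives
\[
h_T\|\bar R_T\|_{0,T}\lesssim \|\nabla e_\ell\|_{0,T}+\|e_\ell\|_{0,T}+h_T\|R_T-\bar R_T\|_{0,T}.
\]
For the jumps, test with $v=b_e E(\bar J_e)$, where $b_e$ is the face bubble and $E$ a polynomial extension to the face patch $\omega_e$, and insert the element bound just obtained:
\[
h_e^{1/2}\|\bar J_e\|_{0,e}\lesssim \|e_\ell\|_{1,\omega_e}+\text{local oscillation terms}.
\]
The triangle inequality then recovers $R_T$ and $J_e$ from their projections, and summing over elements and faces, again with finite overlap, gives \eqref{eqn:2.24c}. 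The $L^2$ parts $\|e_\ell\|_{0,T}$ generated by the convection and reaction terms are bounded by $\|e_\ell\|_V$ globally.

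\textbf{Main obstacle.} The delicate step is the efficiency bound. Because $\boldsymbol A,\boldsymbol b,c$ are only piecewise smooth, $\mathcal L w_\ell$ and the jumps are not polynomials. One must check that the projection degree $2p-1$ in the definition of $\mathrm{osc}_\ell$ makes the bubble arguments valid, so that all nonpolynomial remainders are captured exactly by $\mathrm{osc}_\ell(f,w_\ell)$ rather than by a data oscillation of a different form. For this we would follow the treatment in Feischl et al.\ \cite{feischl2014} and Chapter~34 of \cite{ern2021}.
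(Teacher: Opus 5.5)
Your proposal is correct and follows essentially the same approach as the paper, which gives no proof of its own and simply cites the standard residual analysis of \cite{ainsworth2000,verfurth2013}, Chapter~34 of \cite{ern2021} and \cite{feischl2014}: Galerkin orthogonality with a Scott--Zhang quasi-interpolant for reliability and Verf\"urth's element and face bubbles for efficiency. You are also right that the coercivity \eqref{coercive} makes smallness of $h_0$ unnecessary, and two points are minor: the degree $2p-1$ is no obstacle, since the bubble norm equivalences hold for any fixed polynomial degree, while the adjoint case is not quite verbatim, because $(\boldsymbol b\cdot\nabla v, w^*_\ell)_0$ must also be integrated by parts and the resulting face terms $[\![\overline{\boldsymbol b}\cdot\nu]\!]\,w^*_\ell$ vanish (as the paper's $J^*_e$ tacitly assumes) only when $\boldsymbol b\cdot\nu$ is continuous across interior faces.
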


\section{Computable and theoretical error estimators}\label{sec3}
Lemma \ref{lem:2.3} tells us that the error estimates of approximate eigenspace and approximate eigenvalues are attributed to the error estimate of finite element solutions of the source problems (\ref{eqn:2.4}) and (\ref{eqn:2.12}) with right-hand sides $f=u_{j,\ell}$ and $g=u_{j,\ell}^*$, respectively.
Therefore we give the following computable error estimators.\\
\indent For $T \in \mathcal{T}_\ell$ and $j\in{J}$, we define the computable local error estimator on $T$ as $\eta_{\ell}(u_{j,\ell},T)$ and $\eta^*_{\ell}(u^*_{j,\ell},T)$.
For a given $\mathcal M \subseteq \mathcal T_\ell$, we define the computable error estimators as $\eta_{\ell}(u_{j,\ell}, \mathcal M)$ and $\eta^*_{\ell}(u^*_{j,\ell}, \mathcal M)$.\\
\indent Using the computable error estimators in this paper and consulting the existing standard algorithms, we present the following algorithm.\\
\indent {\bf Algorithm 1.} 
\textsc{Input:} Initial triangulation $\mathcal{T}_{0}$ with mesh size $h_0$ and the bulk parameter $\theta\in(0,1].$ \\
\textsc{Adaptive loop:} For all $\ell=0,1,2,\ldots$ iterate Steps~{\rm{(i)--(v)}} as follows.
\begin{itemize}
\item[\rm{(i)}] Solve (\ref{eqn:2.5}) and (\ref{eqn:2.10}) on $\mathcal{T}_{\ell}$ with mesh size $h_{\ell}$ for discrete solution \\ $(\lambda_{j,\ell}, u_{j,\ell},u^*_{j,\ell})$ with $\|u_{j,\ell}\|_{V}=1$ and  $\|u^*_{j,\ell}\|_{V}=1$ for $j\in{J}$.
\item[\rm{(ii)}] Compute the corresponding estimators $\eta_{\ell}(u_{j,\ell}, T)$ and $\eta^*_{\ell}(u^*_{j,\ell}, T)$ for all $T\in\mathcal{T}_{\ell}$.
\item[\rm{(iii)}] Determine a subset $\mathcal{M}_{\ell}$ of $\mathcal{T}_{\ell}$ of minimal cardinality by {\bf Marking Strategy} such that 
\begin{equation}\label{algorithm}
\sum_{j\in J}(\eta_{\ell}(u_{j,\ell}, \mathcal{M}_{\ell})^2 + \eta^*_{\ell}(u^*_{j,\ell}, \mathcal{M}_{\ell})^2)
\geq
\theta
\sum_{j\in J}( \eta_{\ell}(u_{j,\ell}, \mathcal T_{\ell})^2 + \eta^*_{\ell}(u^*_{j,\ell},\mathcal T_{\ell})^2).
\end{equation}
\item[\rm{(iv)}] Generate $\mathcal{T}_{\ell+1}:={\rm Refine}(\mathcal{T}_{\ell},\mathcal{M}_{\ell})$ by the NVB refinement strategy.
\item[\rm{(v)}] Increase $\ell \rightarrow \ell+1$ and goto Step~\rm{(i)}.
\end{itemize}
\textsc{Output:} Approximate eigenpairs $(\lambda_{j,\ell}, u_{j,\ell},u^*_{j,\ell})$ and error estimators $\eta_{\ell}(u_{j,\ell}, T)$ and $\eta^*_{\ell}(u_{j,\ell}, T)$ for all $\ell\in\mathbb{N}$.\\

\indent The marking strategy in Step (iii) was introduced by D$\ddot{\mathrm{o}}$rfler~\cite{dorfler1996}.\\
\indent In this paper, we assume that $\{u_{j}\}_{j\in {J}}$ and $\{u^*_{j}\}_{j\in{J}}$ are any given orthonormal basis with respect to $(\cdot,\cdot)_0$ in $R(E)$ and $R(E_{*})$, respectively.

\begin{theorem}\label{thm:3.1}
Assume that the initial mesh size $h_0$ is small enough, then the following estimates hold
\begin{align}
    \label{eqn:3.1}
    &\|(K - K_\ell)|_{R(E)}\|_{V}\lesssim \sum_{j\in{J}}\|(K - K_\ell)u_{j}\|_{V}\lesssim N\|(K - K_\ell)|_{R(E)}\|_{V},\\
    \label{eqn:3.2}
    &\|(K_{*}-K_{\ell*})|_{R(E_{*})}\|_{V}\lesssim\sum_{j\in{J}}\|(K_{*} - K_{\ell*})u_{j}^{*}\|_{V}\lesssim N\|(K_{*}-K_{\ell*})|_{R(E_{*})}\|_{V},\\
    \label{eqn:3.3}
    &\widehat{\delta}(R(E),R(E_{\ell}))\lesssim \sum_{j\in{J}}\|({K} - K_\ell)u_{j}\|_{V}\lesssim N\widehat{\delta}(R(E),R(E_{\ell})),\\
    \label{eqn:3.4}
    &\widehat{\delta}(R(E_{*}),R(E_{\ell*}))\lesssim \sum_{j\in{J}}\|(K_{*} - K_{\ell*})u_{j}^{*}\|_{V}\lesssim N\widehat{\delta}(R(E_{*}),R(E_{\ell*})),\\
    \label{eqn:3.5}
    &|\widehat{\lambda}-\widehat{\lambda}_{\ell}| \lesssim N^2\sum_{j\in{J}}\left(\|({K} - K_\ell)u_{j}\|_{V}^{2} + \|(K_{*} - K_{\ell*})u_{j}^{*}\|_{V}^{2}\right).
\end{align}
When the eigenvalue cluster $\{\lambda_{j}\}_{j\in J}$ is composed of non-defective eigenvalues, then for $k\in J$,
\begin{equation}
\label{eqn:3.5a}
|\lambda_{k}-\lambda_{k,\ell}|\lesssim N^2\sum\limits_{j\in{J}}\left(\|({K} - K_\ell)u_{j}\|_{V}^{2} + \|(K_{*} - K_{\ell*})u_{j}^{*}\|_{V}^{2}\right).
\end{equation} 
\end{theorem}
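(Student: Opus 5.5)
The plan is to prove \eqref{eqn:3.1}--\eqref{eqn:3.2} by finite-dimensional norm-equivalence arguments on $R(E)$ and $R(E_*)$. The remaining estimates \eqref{eqn:3.3}--\eqref{eqn:3.5a} will then follow by comparing the exact bases $\{u_j\}$, $\{u_j^*\}$ with the discrete bases $\{u_{j,\ell}\}$, $\{u^*_{j,\ell}\}$ and invoking Lemma~\ref{lem:2.3}.

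\emph{Step 1: \eqref{eqn:3.1} and \eqref{eqn:3.2}.} The upper bound is immediate. Since $\|u_j\|_0=1$, the regularity/equivalence on the finite-dimensional space $R(E)$ gives $\|u_j\|_V\simeq\|u_j\|_0$, with constants depending only on the cluster. Hence $\|(K-K_\ell)u_j\|_V\le \|(K-K_\ell)|_{R(E)}\|_V\|u_j\|_V\lesssim \|(K-K_\ell)|_{R(E)}\|_V$, and summing over $j\in J$ gives the factor $N$. For the lower bound, take $v\in R(E)$ with $\|v\|_V=1$ and write $v=\sum_j c_ju_j$ with $c_j=(v,u_j)_0$. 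By $L^2$-orthonormality, $|c_j|\le\|v\|_0\lesssim\|v\|_V=1$. Therefore $\|(K-K_\ell)v\|_V\le\sum_j|c_j|\,\|(K-K_\ell)u_j\|_V\lesssim\sum_j\|(K-K_\ell)u_j\|_V$. The adjoint estimate \eqref{eqn:3.2} is proved identically.

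\emph{Step 2: relating $u_j$ to $u_{j,\ell}$.} The key intermediate claim is
$$\sum_{j\in J}\|(K-K_\ell)u_{j,\ell}\|_V\simeq\sum_{j\in J}\|(K-K_\ell)u_j\|_V$$
(up to factors of $N$), with the analogous statement for the adjoint bases. To prove it, I use the standard spectral approximation facts available from \eqref{eqn:2.9}: $E$ restricted to $R(E_\ell)$ is an isomorphism onto $R(E)$ that is uniformly close to the identity, with $\|(I-E)|_{R(E_\ell)}\|_V\lesssim\widehat\delta(R(E),R(E_\ell))\to0$.

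For each $u_{j,\ell}$ write $u_{j,\ell}=Eu_{j,\ell}+(I-E)u_{j,\ell}$. Then
$$\|(K-K_\ell)u_{j,\ell}\|_V\le\|(K-K_\ell)|_{R(E)}\|_V\,\|Eu_{j,\ell}\|_V+\|K-K_\ell\|_V\,\|(I-E)u_{j,\ell}\|_V.$$
The second term is bounded by $\|K-K_\ell\|_V\,\widehat\delta(R(E),R(E_\ell))$. By \eqref{eqn:2.20}, $\widehat\delta\lesssim\sum_j\|(K-K_\ell)u_{j,\ell}\|_V$, so this term is a small multiple of the left-hand side once $h_0$ is small, and it can be absorbed. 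Combining with Step 1 gives $\sum_j\|(K-K_\ell)u_{j,\ell}\|_V\lesssim N\|(K-K_\ell)|_{R(E)}\|_V\lesssim N\sum_j\|(K-K_\ell)u_j\|_V$. Together with \eqref{eqn:2.20}, this yields the lower bound in \eqref{eqn:3.3}.

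For the upper bound in \eqref{eqn:3.3}, I need $\|(K-K_\ell)|_{R(E)}\|_V\lesssim\widehat\delta(R(E),R(E_\ell))$. Take $v\in\widehat R(E)$ and use $K_\ell=G_\ell K$. Since $K$ maps $R(E)$ into itself, $\|(K-K_\ell)v\|_V=\|(I-G_\ell)Kv\|_V\lesssim\inf_{\chi\in V_\ell}\|Kv-\chi\|_V$ by C\'ea's lemma. Because $Kv\in R(E)$ with $\|Kv\|_V\lesssim1$, this infimum is at most $\|Kv\|_V\,\delta(R(E),R(E_\ell))\lesssim\widehat\delta$. Then Step 1 gives $\sum_j\|(K-K_\ell)u_j\|_V\lesssim N\widehat\delta$. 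The adjoint estimate \eqref{eqn:3.4} is proved in the same way.

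\emph{Step 3: eigenvalues.} For \eqref{eqn:3.5} and \eqref{eqn:3.5a}, start from the middle term of \eqref{eqn:2.22} and \eqref{eqn:2.22a}, namely $N^2\varepsilon_\ell\varepsilon_\ell^*$. By C\'ea's lemma, $\varepsilon_\ell\le\delta(R(E),V_\ell)\le\delta(R(E),R(E_\ell))$. Then \eqref{eqn:3.3} gives $\varepsilon_\ell\lesssim\sum_j\|(K-K_\ell)u_j\|_V$, and likewise $\varepsilon_\ell^*\lesssim\sum_j\|(K_*-K_{\ell*})u_j^*\|_V$. Applying $ab\le\frac12(a^2+b^2)$ and Cauchy--Schwarz over $J$ turns the product into the sum of squares. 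Cauchy--Schwarz introduces an extra factor $N$, which must be absorbed into the constant or into the $N^2$; this needs tracking.

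The main obstacle is the absorption argument in Step 2. It requires controlling $(I-E)|_{R(E_\ell)}$ in the $V$-norm uniformly over $\mathbb T$, since the meshes are adaptive rather than quasi-uniform and $h_\ell$ need not tend to zero globally. I would first try to rely on the uniform smallness of $\|K-K_\ell\|_V$, using the assumption that $h_0$ is small. If that is not enough, I would avoid absorption altogether and instead prove $\|(K-K_\ell)|_{R(E)}\|_V\simeq\widehat\delta(R(E),R(E_\ell))$ directly, via the C\'ea argument above together with the reverse inequality $\widehat\delta\lesssim\|(K-K_\ell)|_{R(E)}\|_V$ from Babu\v{s}ka--Osborn theory.
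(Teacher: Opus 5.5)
Your Step 1 is the paper's argument. Your Step 3 follows the paper's outline: $\varepsilon_\ell\le\delta(R(E),R(E_\ell))$, then the first inequality of \eqref{eqn:2.22}, resp.\ \eqref{eqn:2.22a}.

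\textbf{The lower bound in \eqref{eqn:3.3}.} This is where you genuinely differ. The paper never touches the discrete bases here. It quotes the equivalence $\widehat{\delta}(R(E),R(E_\ell))\simeq\|(K-K_\ell)|_{R(E)}\|_V$ (Lemma 3.4 of Yang et al., a Babu\v{s}ka--Osborn type result) and combines it with \eqref{eqn:3.1}. Your C\'ea argument is a clean self-contained proof of one direction; it even gives $\|(K-K_\ell)|_{R(E)}\|_V\lesssim\varepsilon_\ell$.

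Your absorption argument for the other direction does close, and the obstacle you single out is not one. Every $\mathcal T_\ell\in\mathbb T$ refines $\mathcal T_0$, so $h_\ell\le h_0$, and C\'ea plus the regularity estimate give $\|K-K_\ell\|\lesssim h_\ell^{r}\le h_0^{r}$ uniformly on $\mathbb T$.

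However, the absorption argument proves less than stated. Summing $\|(K-K_\ell)|_{R(E)}\|_V\,\|Eu_{j,\ell}\|_V$ over $j\in J$ costs a factor $N$. You therefore only get $\widehat\delta(R(E),R(E_\ell))\lesssim N\sum_{j\in J}\|(K-K_\ell)u_j\|_V$, whereas \eqref{eqn:3.3} has an $N$-free lower bound. It also relies on \eqref{eqn:2.20}, hence on (C1) or (C2), which Theorem~\ref{thm:3.1} does not assume. That is harmless, since a Gram--Schmidt discrete basis can always be chosen, but it has to be said. The $N$-free bound needs the direct estimate $\widehat\delta(R(E),R(E_\ell))\lesssim\|(K-K_\ell)|_{R(E)}\|_V$ followed by Step 1. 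That is your fallback, and it is exactly the paper's proof.

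\textbf{The extra factor $N$ in Step 3.} Write $a_j:=\|(K-K_\ell)u_j\|_V$ and $b_j:=\|(K_*-K_{\ell*})u_j^*\|_V$. The extra $N$ you flag is real. The paper's written argument carries the same slack: it also passes through $\varepsilon_\ell\lesssim\sum_j a_j$ and then needs $(\sum_j a_j)(\sum_j b_j)\lesssim\sum_j(a_j^2+b_j^2)$, which costs $N$.

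The factor disappears if you sharpen Step 1 with Parseval. Since $\sum_{j}|c_j|^2=\|v\|_0^2\lesssim\|v\|_V^2$, Cauchy--Schwarz gives $\|(K-K_\ell)|_{R(E)}\|_V\lesssim(\sum_j a_j^2)^{1/2}$.

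Moreover, $\varepsilon_\ell\lesssim\|(K-K_\ell)|_{R(E)}\|_V$ holds directly. For $u\in\widehat R(E)$, set $w=(K|_{R(E)})^{-1}u\in R(E)$ and take $\chi=K_\ell w\in V_\ell$. Then $\|u-\chi\|_V=\|(K-K_\ell)w\|_V$.

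With the adjoint analogues, $N^2\varepsilon_\ell\varepsilon_\ell^*\lesssim N^2(\sum_j a_j^2)^{1/2}(\sum_j b_j^2)^{1/2}\le\frac{N^2}{2}\sum_j(a_j^2+b_j^2)$. This is \eqref{eqn:3.5} and \eqref{eqn:3.5a} with the stated power of $N$.
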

\begin{proof}
Thanks to $u = \sum_{j\in{J}} (u,u_j)_0u_j$,  we deduce
\begin{eqnarray*}
&&\|(K - K_{\ell})|_{R(E)}\|_{V}=\sup_{u\in \widehat{R}(E)}\|(K - K_{\ell})u\|_{V}\\
&&\quad =\sup_{u\in \widehat{R}(E)}\|(K - K_{\ell})\sum\limits_{j\in{J}} (u,u_j)_0u_j\|_{V}\\
&&\quad \leq \sup_{u\in \widehat{R}(E)}\sum\limits_{j\in{J}}|(u,u_j)_0|\|(K - K_{\ell})u_j\|_{V}\\
&&\quad \lesssim \sum_{j\in{J}}\|(K - K_{\ell})u_{j}\|_{V}.
\end{eqnarray*}
We obtain the first estimate of (\ref{eqn:3.1}). 
And the second estimate of (\ref{eqn:3.1}) is obvious. Similarly, we can deduce (\ref{eqn:3.2}).\\
\indent Referring to Lemma 3.4 of \cite{yang2024}, we have
\begin{equation*}
    \widehat{\delta}(R(E),R(E_{\ell}))\simeq \|(K - K_\ell)|_{R(E)}\|_{V}.
\end{equation*} 
Then combining \eqref{eqn:3.1}, we obtain \eqref{eqn:3.3}. Similarly, we can deduce (\ref{eqn:3.4}).\\
\indent From Theorem 2.2 of \cite{yang2024}, (\ref{eqn:3.3}) and  (\ref{eqn:3.4}), we have 
$$\varepsilon_\ell \lesssim \widehat{\delta}(R(E),R(E_{\ell}))\lesssim \sum_{j\in{J}}\|({K} - K_\ell)u_{j}\|_{V},$$ 
$$\varepsilon_\ell^*\lesssim \widehat{\delta}(R(E_*),R(E_{\ell*}))\lesssim \sum_{j\in{J}}\|({K}_* - K_{\ell*})u^*_{j}\|_{V}.$$
Then combining \eqref{eqn:2.22} we obtain \eqref{eqn:3.5} and combining \eqref{eqn:2.22a} we obtain \eqref{eqn:3.5a}.
\end{proof}

Theorem \ref{thm:3.1} shows that the error estimates of approximate eigenspace and approximate eigenvalues are attributed to the error estimate of finite element solutions of the source problems (\ref{eqn:2.4}) and (\ref{eqn:2.12}) with right-hand sides $f=u_j$ and $g=u_j^*$, respectively. 
Therefore we give the following theoretical error estimators.\\
\indent For $T \in \mathcal{T}_\ell$ and $j\in{J}$, due to \eqref{eqn:2.20} and \eqref{eqn:3.3}, and 
\eqref{eqn:2.21} and \eqref{eqn:3.4}, we define the theoretical local error estimator on $T$ as
$\mu_\ell(u_j, T)^2 :=\eta_\ell(u_j, T)^2$ and $ \mu^*_\ell(u^*_j, T)^2 :=\eta^*_\ell(u^*_j, T)^2$.
Referring to \cite{gallistl2015,bonito2016}, we specify the notations $\mu_\ell$ and $\mu^*_\ell$ as the theoretical error estimators in this paper.
For a given $\mathcal M \subseteq \mathcal T_\ell$, we define the theoretical error estimators 
on $\mathcal M$ as 
\begin{align*} 
\mu_\ell(u_j, \mathcal M)^2 := \sum_{T\in\mathcal M}\mu_{\ell}(u_j, T)^2 \text{ and } \mu^*_{\ell}(u^*_j,\mathcal M)^2 := \sum_{T\in\mathcal M}\mu^*_{\ell}(u^*_j,T)^2.
\end{align*}

\indent Consider (\ref{eqn:2.24a}), (\ref{eqn:2.24b}), (\ref{eqn:2.24c}), and (\ref{eqn:2.24d}) with the right-hand sides $f=u_j$ and $g=u^*_j$, respectively.
Then the oscillation terms are
\begin{align*}
\mathrm{osc}_\ell(u_j, K_\ell u_j)^2 := & \sum_{T\in\mathcal T_\ell}\bigg( h^2_T \|(1-\Pi_\ell^{2p-1})(\mathcal{L}(K_\ell u_j)-u_j)\|^2_{0,T}\\ 
&\quad +\sum_{e \in \mathcal{E}^0_{\ell}, e \subset \partial T} h_e \|(1-\Pi_\ell^{2p-1})[\![ \boldsymbol{A} \nabla (K_\ell u_j) \cdot \nu  ]\!]\|^2_{0,e} \bigg),\\
\mathrm{osc}^*_\ell(u^*_j, K^*_\ell u^*_j)^2 := & \sum_{T\in\mathcal T_\ell}\bigg( h^2_T \|(1-\Pi_\ell^{2p-1})(\mathcal{L}^*(K^*_\ell u^*_j) - u^*_j)\|^2_{0,T}\\ 
&\quad +\sum_{e \in \mathcal{E}^0_{\ell}, e \subset \partial T} h_e \|(1-\Pi_\ell^{2p-1})[\![ \overline{\boldsymbol{A}} \nabla (K^*_\ell u^*_j) \cdot \nu  ]\!]\|^2_{0,e} \bigg).
\end{align*} 
It can be easily verified that
when the initial mesh size of $h_0$ is sufficiently small,
\begin{equation*}
\mathrm{osc}_\ell(u_j, K_\ell u_j)^2 \leq \frac{1}{2} \sum_{j\in J} \mu_\ell(u_j,\mathcal{T}_\ell)^2,\quad \mathrm{osc}^*_\ell(u^*_j, K^*_\ell u^*_j)^2 \leq \frac{1}{2} \sum_{j\in J} \mu^*_\ell(u^*_j,\mathcal{T}_\ell)^2,
\end{equation*}
since the coefficients $\boldsymbol{A}$, $\boldsymbol{b}$ and $c$ are piecewise sufficiently smooth functions.
Thus when the initial mesh size of $h_0$ is sufficiently small, from Lemma \ref{lem:2.2},
there exist ${C_{\mathrm{rel}}}$ and $C_{\mathrm{eff}}$, independent of $h_\ell$, such that
\begin{align}\label{eqn:rel_a}
&\sum_{j\in J} \|Ku_j - K_\ell u_j\|_V^2
\leq
{C^2_{\mathrm{rel}}}
\sum_{j\in J} \mu_\ell(u_j,\mathcal{T}_\ell)^2,\\ \label{eqn:rel_b}
&\sum_{j\in J} \|K_*u^*_j - K_{\ell*} u^*_j\|_V^2
\leq
{C^2_{\mathrm{rel}}}
\sum_{j\in J} \mu^*_\ell(u^*_j,\mathcal{T}_\ell)^2,\\
\label{eqn:eff_a}
&C_{\mathrm{eff}}^{-2}\sum_{j\in J}\mu_\ell(u_j,\mathcal{T}_\ell)^2 \le\sum_{j\in J}\|Ku_j - K_\ell u_j\|^2_V ,\\ 
\label{eqn:eff_b}
&C_{\mathrm{eff}}^{-2}\sum_{j\in J}\mu^*_\ell(u_j^*,\mathcal{T}_\ell)^2 \le\sum_{j\in J}\|K_*u_j^* - K_{\ell*} u_j^*\|^2_V.
\end{align} 

\indent Let $\Lambda_{\ell} := E_{\ell} \circ G_{\ell}$ and $\Lambda_{\ell*} := E_{\ell*} \circ G_{\ell*}$. 

Thanks to Proposition 5.1 of \cite{gallistl2015} and Lemma 1 of \cite{bonito2016}, we deduce the theorem below.

\begin{theorem}\label{thm:3.2}
Assume that the initial mesh size $h_0$ is small enough so that 
\begin{align}
\label{eqn:3.7}
&\max_{j \in  J} \|u_j- \Lambda_{\ell} u_j\|_0 \le \sqrt{1+(2N)^{-1}} -1, \\
\label{eqn:3.7_adjoint}
&\max_{j \in  J} \|u^*_j- \Lambda_{\ell*} u^*_j\|_0 \le \sqrt{1+(2N)^{-1}} -1.
\end{align}
Then for $T \in \mathcal T_{\ell}$,
\begin{align}
\label{eqn:3.8}
&\sum_{j\in J} \mu_{\ell}(u_j, T)^2 \le \frac{303}{200}\sum_{j\in J}\eta_{\ell}(u_{j,\ell}, T)^2 + 101\vartheta_{\ell}(T)^2, \\
\label{eqn:3.9}
&\sum_{j\in J} \eta_{\ell}(u_{j,\ell}, T)^2 \le \frac{101}{50}\sum_{j\in J}\mu_{\ell}(u_j, T)^2 + 202\vartheta_{\ell}(T)^2,\\
\label{eqn:3.8_adjoint}
&\sum_{j\in J} \mu^*_{\ell}(u_j, T)^2 \le \frac{303}{200}\sum_{j\in J}\eta^*_{\ell}(u^*_{j,\ell}, T)^2 + 101\vartheta^*_{\ell}(T)^2, \\
\label{eqn:3.9_adjoint}
&\sum_{j\in J} \eta^*_{\ell}(u^*_{j,\ell}, T)^2 \le \frac{101}{50}\sum_{j\in J}\mu^*_{\ell}(u^*_j, T)^2 + 202\vartheta^*_{\ell}(T)^2,
\end{align}
where $\vartheta_{\ell}(T)^2:= \sum_{j\in J}(h_T^2\|R_{1,j}\|^2_{0,T} + \sum_{e \in \mathcal{E}_{\ell}, e \subset \partial T}h_e\|R_{2,j}\|^2_{0,e})$ with $R_{1,j} := (u_j - \Lambda_{\ell} u_j) - \mathcal{L}(K_{\ell}(u_j - \Lambda_{\ell} u_j))$ and $R_{2,j} := [\![ \boldsymbol{A} \nabla (K_{\ell}(u_j - \Lambda_{\ell} u_j))\cdot \nu ]\!]$, analogously we can define $\vartheta^*_{\ell}(T)$.
Furthermore we have
\begin{equation}\label{eqn:3.10}
 \sum_{j\in J}
( \mu_{\ell}(u_j, \mathcal M_{\ell})^2 + \mu^*_{\ell}(u_j,\mathcal M_{\ell})^2 ) 
\geq
\tilde{\theta}
 \sum_{j\in J}( \mu_{\ell}(u_j,\mathcal T_{\ell})^2 + \mu^*_{\ell}(u_j,\mathcal T_{\ell})^2),
\end{equation}
for the modified bulk parameter
\begin{equation}\label{eqn:3.11}
\tilde{\theta} = \left(\frac{50}{101}\times\frac{200}{303} - CNh_{\ell}^{2r}\right)\theta \geq \frac{1}{4}\theta >0.
\end{equation}
\end{theorem}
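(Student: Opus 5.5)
The argument follows Proposition~5.1 of \cite{gallistl2015} and Lemma~1 of \cite{bonito2016}. The idea is to compare the theoretical family $\{u_j\}_{j\in J}$ with a family of discrete eigenfunctions built from the $u_j$ themselves.

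First I set $v_j:=\Lambda_\ell u_j\in R(E_\ell)$. Under \eqref{eqn:3.7}, the Gram matrix $M=((v_i,v_j)_0)_{i,j\in J}$ satisfies $\|M-I\|\le 1/2$, since $\|v_j\|_0\ge 1-\|u_j-v_j\|_0$ and the $u_j$ are $L^2$-orthonormal. Hence $\{v_j\}$ is a basis of $R(E_\ell)$, because $\dim R(E)=\dim R(E_\ell)=N$. Applying $M^{-1/2}$ gives an $L^2$-orthonormal basis $\tilde v_j$ of $R(E_\ell)$, with $\|\tilde v_j - v_j\|$ controlled by $\|M-I\|$.

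Next I use that the estimator $\sum_{j}\eta_\ell(\cdot,T)^2$ is invariant under unitary changes of orthonormal basis. The residuals are linear in $f$, since $w_\ell=K_\ell f$, so $\sum_j \eta_\ell(f_j,T)^2$ is a Frobenius-type norm over $j$. This gives $\sum_j\eta_\ell(u_{j,\ell},T)^2=\sum_j \eta_\ell(\tilde v_j,T)^2$ under (C1). The norm $\|\cdot\|_V$ versus $L^2$ normalization of $u_{j,\ell}$ only changes constants; alternatively one works with the normalization in (C1) and absorbs the difference.

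Then I write $u_j=v_j+(u_j-\Lambda_\ell u_j)$. By linearity, the residual for $u_j$ equals the residual for $v_j$ plus $(R_{1,j},R_{2,j})$. The elementwise triangle inequality $(a+b)^2\le(1+\epsilon)a^2+(1+\epsilon^{-1})b^2$ with $\epsilon=1/100$ yields
\[
\sum_j\mu_\ell(u_j,T)^2\le\tfrac{101}{100}\sum_j\eta_\ell(v_j,T)^2+101\,\vartheta_\ell(T)^2 .
\]
Passing from $v_j$ to $\tilde v_j=\sum_k (M^{-1/2})_{kj}v_k$ costs a factor $\|M^{1/2}\|^2\le 3/2$. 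This gives \eqref{eqn:3.8} with $\frac{303}{200}=\frac{101}{100}\cdot\frac32$. The reverse direction uses $\|M^{-1/2}\|^2\le 2$ and $v_j=u_j-(u_j-\Lambda_\ell u_j)$, which produces $\frac{101}{50}$ and $202$ in \eqref{eqn:3.9}. The adjoint estimates \eqref{eqn:3.8_adjoint}--\eqref{eqn:3.9_adjoint} are identical, using $\Lambda_{\ell*}$ and $\eta^*_\ell$. The main obstacle here is making the constant bookkeeping for $\|M^{\pm1/2}\|$ match exactly; I would also need $\|M-I\|\le 1/2$ in spectral norm, which I would derive from \eqref{eqn:3.7} by a Gershgorin/Frobenius estimate with the factor $N$.

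For \eqref{eqn:3.10} I need $\sum_T\vartheta_\ell(T)^2$ to be of higher order. Since $u_j-\Lambda_\ell u_j$ enters only through $f\mapsto K_\ell f$, efficiency-type bounds give
\[
\vartheta_\ell(\mathcal T_\ell)^2\lesssim\sum_j\|u_j-\Lambda_\ell u_j\|_0^2 .
\]
A duality (Aubin--Nitsche) argument, using the regularity \eqref{eqn: regularity} and $u_j-\Lambda_\ell u_j=(I-E_\ell)u_j+E_\ell(u_j-G_\ell u_j)$, gives $\|u_j-\Lambda_\ell u_j\|_0\lesssim h_\ell^{r}\|(K-K_\ell)u_j\|_V$. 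Hence, by \eqref{eqn:rel_a},
\[
\vartheta_\ell(\mathcal T_\ell)^2\lesssim N h_\ell^{2r}\sum_j\mu_\ell(u_j,\mathcal T_\ell)^2 ,
\]
and analogously for the adjoint.

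Finally I sum \eqref{eqn:3.8} over $\mathcal M_\ell$ and \eqref{eqn:3.9} over $\mathcal T_\ell$, combine them with the Dörfler marking \eqref{algorithm}, and absorb the $\vartheta$ terms. This gives \eqref{eqn:3.10} with $\tilde\theta=(\frac{50}{101}\cdot\frac{200}{303}-CNh_\ell^{2r})\theta$. Since $\frac{50}{101}\cdot\frac{200}{303}\approx0.327$, taking $h_0$ small yields $\tilde\theta\ge\theta/4$. The $L^2$ duality step for $\Lambda_\ell$ is the delicate point; I would first try to establish it via the identity $E_\ell K_\ell=K_\ell E_\ell$ together with the bound on $E-E_\ell$.
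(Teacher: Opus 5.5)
Your proposal is correct and follows the paper's proof. The paper works directly with the coefficient matrix $M_{jm}=(\Lambda_\ell u_j,u_{m,\ell})_0$ and the identity $MM^{\mathsf H}=B=[(\Lambda_\ell u_j,\Lambda_\ell u_m)_0]$; this is exactly your L\"owdin factor $B^{1/2}$ composed with the unitary change of orthonormal basis. It then uses the same Gershgorin bound $1\le 2\sigma_i\le 3$, Young's inequality with $\epsilon=10^{-2}$, and the same inverse/trace plus Aubin--Nitsche argument for $\sum_T\vartheta_\ell(T)^2\lesssim Nh_\ell^{2r}\sum_j\mu_\ell(u_j,\mathcal T_\ell)^2$. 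In the paper that last bound goes through $\|(K-K_\ell)|_{R(E)}\|$ and \eqref{eqn:3.1} rather than a per-$j$ estimate, which is fine because you sum over $j$.

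One slip: in your last step the pairing is reversed. The lower bound with $\frac{50}{101}\cdot\frac{200}{303}$ comes from summing \eqref{eqn:3.9} over $\mathcal M_\ell$ and \eqref{eqn:3.8} over $\mathcal T_\ell$.
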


\begin{proof}
Using \eqref{eqn:2.4}, \eqref{eqn:2.6}, and \eqref{eqn:3.6a}, we compute
\begin{align}
\label{eqn:3.12}
\begin{aligned}
u_j&=  (u_j - \Lambda_{\ell} u_j) + \Lambda_{\ell} u_j \\ 
   & = (u_j - \Lambda_{\ell} u_j) + \sum_{m \in J} (\Lambda_{\ell} u_j, u_{m,\ell})_0 u_{m,\ell}.
\end{aligned}
\end{align}
Then
\begin{align}
\label{eqn:3.13}
\begin{aligned}
K_{\ell}u_j &= K_{\ell}(u_j - \Lambda_{\ell} u_j) + \sum_{m \in J} (\Lambda_{\ell} u_j, u_{m,\ell})_0 K_{\ell} u_{m,\ell}.
\end{aligned}
\end{align}

Combining the above two equations yields
\begin{align}\label{eqn:3.14}
u_j - \mathcal{L} ({K}_{\ell} u_j) = & \sum_{m \in J} (\Lambda_{\ell} u_j, u_{m,\ell})_0 (u_{m,\ell} - \mathcal{L} ({K}_{\ell} u_{m,\ell})) + R_{1,j},\\ \label{eqn:3.15}
[\![ \boldsymbol{A} \nabla ({K}_{\ell} u_j)\cdot \nu ]\!] = & \sum_{m \in J} (\Lambda_{\ell} u_j, u_{m,\ell})_0 [\![ \boldsymbol{A} \nabla  ({K}_{\ell} u_{m,\ell}) \cdot \nu ]\!] + R_{2,j}.
\end{align}

We now define the following vectors 
\begin{align*}
U & := [u_j - \mathcal{L}({K}_{\ell} u_j], \quad U_{\ell}  := [u_{m,\ell} - \mathcal{L}({K}_{\ell} u_{m,\ell})], \\
W & := [[\![ \boldsymbol{A} \nabla ({K}_{\ell} u_j) \cdot \nu ]\!] ],\quad W_{\ell}:= [[\![ \boldsymbol{A} \nabla ({K}_{\ell} u_{m,\ell})\cdot \nu ]\!] ],\\
R_1 & := [R_{1,j}], \quad R_2 := [R_{2,j}],
\end{align*}
where \( n+1 \leq j, m \leq n+N \). 
We also define the matrix $M\in \mathbb{C}^{N\times N}$ with entries $M_{j,m}= (\Lambda_{\ell} u_j, u_{m,\ell})_0$ with \( j \) representing the row index and \( m \) the column index.
The relationships given by \eqref{eqn:3.14} and \eqref{eqn:3.15} in the $L^2$ sense can be expressed as
\begin{align}
\label{eqn:3.16}
U = MU_{\ell} + R_1, \quad W = M W_{\ell} + R_2.
\end{align}
We let \( \|\cdot\|_2 \) denote the operator norm in \( {\ell}_2(\mathbb{C}^N) \) and also let \( \|\cdot\|_2 \) denote the Euclidean length. 

Let \( v \in \mathbb{C}^N \). We have
\(
\|Mv\|_2^2 = v^{\mathsf{H}} {M}^{\mathsf{H}} M v \leq \|{M}^{\mathsf{H}} M\|_2 \|v\|_2^2.
\)
Thus, we can derive the inequality
\(
\|M\|_2^2 \leq \|{M}^{\mathsf{H}} M\|_2.
\)
Since \( {M}^{\mathsf{H}} M \) is self-adjoint and positive semi-definite, it follows that 
\(
\|{M}^{\mathsf{H}} M\|_2
\)
is equal to the maximum eigenvalue of \( {M}^{\mathsf{H}} M \).

The row $j$ and the column $m$ element of matrix $M{M}^{\mathsf{H}}$ is
\begin{equation}
\label{eqn:3.18}
\begin{aligned}
\sum_{i=1}^N (\Lambda_{\ell} u_j, u_{i,\ell})_0\overline{(\Lambda_{\ell} u_m, u_{i,\ell})_0}
&= \sum_{i=1}^N (\Lambda_{\ell} u_j, (\Lambda_{\ell} u_m, u_{i,\ell})_0u_{i,\ell})_0\\
&= \bigg(\Lambda_{\ell} u_j, \sum_{i=1}^N(\Lambda_{\ell} u_m, u_{i,\ell})_0u_{i,\ell}\bigg)_0\\
&= (\Lambda_{\ell} u_j, \Lambda_{\ell}u_{m})_0.
\end{aligned}
\end{equation}
Thus, we define \(B := M M^{\mathsf{H}} = [(\Lambda_{\ell} u_j, \Lambda_{\ell} u_m)_0]\). 
A fundamental result in linear algebra indicates that \(B\) is isospectral with \(M^{\mathsf{H}} M\). Consequently, it follows that \(\|M^{\mathsf{H}} M\|_2 = \|B\|_2\), and both quantities correspond to the maximum eigenvalue of \(B\). The matrix \(B\) is thoroughly analyzed in the proof of Lemma 5.1 of \cite{gallistl2015}. Notably, \(B\) is nonsingular under the condition \eqref{eqn:3.7}, and according to (5.2) and the subsequent results in \cite{gallistl2015}, we obtain
\begin{equation}
\label{eqn:3.19}
\frac{2N-1}{2N}  \le B_{ii} \le \frac{2N+1}{2N} \qquad  \text{and} \qquad  \sum_{j \neq i} |B_{ij}|  \le \frac{N-1}{2N}.
\end{equation}
Gershgorin's theorem implies that the eigenvalues \(\{\sigma_i\}\) of the matrix \(B\) are constrained by the following relationship
\begin{align}
\label{eqn:3.20}
1 \le 2\sigma_i \le 3, ~1 \le i \le N.
\end{align}
Thus $\|M\|_2^2 \le \|{M}^{\mathsf{H}} M\|_2 = \|M {M}^{\mathsf{H}}\|_2\le 3/2$, and by the Young inequality with $\epsilon=10^{-2}$, 
\begin{align*}
&\int_{T}\|U\|_2^2\mathrm{d}x \leq \frac{3(1+\epsilon)}{2}\int_{T}\|U_\ell\|_2^2\mathrm{d}x+\left(1+\frac{1}{\epsilon}\right)\int_{T}\|R_1\|_2^2\mathrm{d}x,\\
&\int_{e}\|W\|_2^2\mathrm{d}s \leq \frac{3(1+\epsilon)}{2}\int_{e}\|W_\ell\|_2^2\mathrm{d}s+\left(1+\frac{1}{\epsilon}\right)\int_{e}\|R_2\|_2^2\mathrm{d}s.
\end{align*}
Thus \eqref{eqn:3.8} holds.

The invertibility of matrix \( B \) ensures that the matrix \( M \) is also invertible. By employing the aforementioned computations, we obtain the inequality 
\(
\|M^{-1}\|_2^2 \leq \|{(M^{-1})}^{\mathsf{H}} M^{-1}\|_2 = \|B^{-1}\|_2.
\)
Given that \( B \) is both positive and diagonalizable, we can apply equation \eqref{eqn:3.20} to conclude that 
\(
\|B^{-1}\|_2 = \frac{1}{\min_{1 \leq i \leq N} \sigma_i} \leq 2.
\)
Therefore $\|M^{-1}\|^2_2 \le 2$ and by Young's inequality with $\epsilon=10^{-2}$,  
\begin{align*}
&\int_{T}\|U_\ell\|_2^2\mathrm{d}x \leq 2(1+\epsilon)\int_{T}\|U\|_2^2\mathrm{d}x+2\left(1+\frac{1}{\epsilon}\right)\int_{T}\|R_1\|_2^2\mathrm{d}x,\\
&\int_{e}\|W_\ell\|_2^2\mathrm{d}s \leq 2(1+\epsilon)\int_{e}\|W\|_2^2\mathrm{d}s+2\left(1+\frac{1}{\epsilon}\right)\int_{e}\|R_2\|_2^2\mathrm{d}s.
\end{align*}
Thus \eqref{eqn:3.9} holds.

Similarly, we can obtain \eqref{eqn:3.8_adjoint} and \eqref{eqn:3.9_adjoint}.\\
\indent 
We will prove below that \(\sum_{\kappa\in\mathcal{T}_{\ell}} \vartheta_{\ell}(\kappa)^2\) and \(\sum_{\kappa\in\mathcal{T}_{\ell}} \vartheta^*_{\ell}(\kappa)^2\) are higher-order terms in comparison with \(\sum_{j\in J} \mu_{\ell}(u_j,\mathcal T_{\ell})^2\) and \(\sum_{j\in J} \mu^*_{\ell}(u^*_j,\mathcal T_{\ell})^2\).
From inverse estimate, (\ref{eqn:2.8}), and trace estimate, we deduce
\begin{align}
\nonumber
\sum_{T\in\mathcal{T}_{\ell}} \vartheta_{\ell}(T)^2
= & \sum_{T\in\mathcal{T}_{\ell}} \sum_{j\in {J}} \bigg( h^2_T\|u_j - \Lambda_{\ell} u_j\|^2_{0,T} + h^2_T\|\mathcal{L}({K}_{\ell}( u_j  - \Lambda_{\ell} u_j))\|^2_{0,T} \\
\nonumber
&\qquad + \sum_{e \in \mathcal{E}_{\ell}, e \subset \partial T} h_e\|[\![ \boldsymbol{A} \nabla (K_{\ell}(u_j - \Lambda_{\ell} u_j))\cdot \nu ]\!]\|^2_{0,e}\bigg)\\
\nonumber
\lesssim & \sum_{j\in {J}} (h_\ell^2 + 1 + A^2_{\max}) \|u_j  - \Lambda_{\ell} u_j\|^2_{0}\\
\label{eqn:3.21}
\lesssim & \sum_{j\in {J}} (h_\ell^2 + 1 + A^2_{\max})\left(\|u_j  - E_{\ell} u_j\|^2_{0} + \|u_j- G_{\ell} u_j\|^2_{0}\right).
\end{align}

From (7.3) in \cite{babuska1991}, the Nitsche technique, (\ref{eqn: regularity}), (\ref{eqn:3.1}), (\ref{eqn:3.3}), and (\ref{eqn:rel_a}),  we deduce
\begin{equation*}
\begin{aligned}
&\sum_{j\in {J}} \|u_j  - E_{\ell} u_j\|^2_{0} + \|u_j- G_{\ell} u_j\|^2_{0}  \lesssim
\sum_{j\in {J}} \|(K  - K_{\ell})|_{R(E)}\|^2_{0} +\sum_{j\in {J}} \|u_j- G_{\ell} u_j\|^2_{0}
\\
&\quad \lesssim N {h_\ell^{2r}}\|(K  - K_{\ell})|_{R(E)}\|^2_{V} + h_\ell^{2r}\sum_{j\in {J}} \|u_j- G_{\ell} u_j\|^2_{V}
\\
&\quad \lesssim N  h_\ell^{2r}\sum_{j\in {J}}\|(K - K_{\ell})u_{j}\|^2_{V} 
+ N h_\ell^{2r}\widehat{\delta}(R(E),R(E_{\ell}))^2
\\
&\quad \lesssim N {h_\ell^{2r}}\sum_{j\in J} \mu_{\ell}(u_j,\mathcal T_{{\ell}})^2.
\end{aligned}
\end{equation*} 
Thus collecting the above estimate into (\ref{eqn:3.21}) we have 
\begin{equation}\label{eqn:3.25b}
\begin{aligned}
\sum_{T\in\mathcal{T}_{\ell}} \vartheta_{\ell}(T)^2  &\lesssim  N h_\ell^{2r} \sum_{j\in J} \mu_{\ell}(u_j,\mathcal T_{{\ell}})^2.
\end{aligned}
\end{equation} 
Similarly we can obtain 
\begin{equation}\label{eqn:3.25c}
\begin{aligned}
\sum_{T\in\mathcal{T}_{\ell}} \vartheta^*_{\ell}(T)^2  &\lesssim  N h_\ell^{2r} \sum_{j\in J} \mu^*_{\ell}(u^*_j,\mathcal T_{\ell})^2.
\end{aligned}
\end{equation} 

Since $h_\ell$ is small enough, from (\ref{algorithm}), (\ref{eqn:3.8}), (\ref{eqn:3.8_adjoint}), (\ref{eqn:3.9}), and (\ref{eqn:3.9_adjoint}), we deduce that (\ref{eqn:3.10}) and (\ref{eqn:3.11}) hold true.  This completes the proof of the theorem.
\end{proof}

\section{Optimal convergence of the adaptive scheme}\label{sec4}

For any $m\in\mathbb{N}$, we denote by
\[
\mathbb T(m)=
  \{\mathcal{T}\in\mathbb T 
        : 
    \# \mathcal{T} - \#\mathcal{T}_0
               \leq m \}
\]
the set of admissible
triangulations in $\mathbb T$ whose cardinality differs from that
of $\mathcal{T}_0$ by $m$ or less. Let $\mathsf{W}=\{Ku_j: j\in J\}$ and $\mathsf{W}^*=\{K_*u^*_j: j\in J\}$ denote the generalized eigenfunction clusters.
Let $V_{\mathcal T}$ denote the finite-dimensional space defined on the mesh $\mathcal T$. The optimal convergence rate $s\in(0,+\infty)$ obtained by any
admissible mesh in $\mathbb{T}$ is characterized with respect to
\begin{equation*}
 |(\mathsf{W},\mathsf{W}^*)|_{\mathcal{A}_s} 
 = \sup_{m\in\mathbb{N}} m^s
      \inf_{\mathcal{T}\in\mathbb{T}(m)} (\Xi_{\mathcal T}+\Xi_{\mathcal T}^*),
\end{equation*}
where
\begin{align*}
\Xi_{\mathcal T} &= \sum_{j\in J}\inf_{\chi_j\in V_{\mathcal T}} \|Ku_j - \chi_j\|_V,\quad \Xi_{\mathcal T}^* = \sum_{j\in J}\inf_{\chi_j\in V_{\mathcal T}} \|K_*u^*_j - \chi_j\|_V.
\end{align*}
It is valid that $|(\mathsf{W},\mathsf{W}^*)|_{\mathcal{A}_s}<\infty$ if the rate of convergence $\Xi_{\mathcal T}+\Xi_{\mathcal T}^*=O(m^{-s})$ holds for the optimal triangulations $\mathcal T\in\mathbb{T}(m)$. The equivalence between $\Xi_{\mathcal T_\ell}+\Xi_{\mathcal T_\ell}^*$ and $\sum_{j\in J} (\| (K - K_\ell) u_j\|_V + \| (K_* - K_{\ell*}) u^*_j\|_V)$ is valid.\\

Lemma 5.4 of \cite{feischl2014} tell us that the following lemma holds. 
\begin{lemma}\label{lem:drel}
Assume that the mesh size of $\mathcal T_\ell$ is sufficiently small. 
Then there exists a constant ${C_{\mathrm{drel}}}>0$ such that
for all refinements $\mathcal T_\star\in \mathbb{T}$ of a triangulation $\mathcal T_\ell\in\mathbb{T} $, it holds
\begin{align}\label{eq:drel}
 \|K_\star u - K_\ell u\|_V \le{C_{\mathrm{drel}}}
\mu_\ell (u, \mathcal{T}_\ell \setminus \mathcal{T}_\star).
\end{align}
\end{lemma}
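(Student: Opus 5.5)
Write $e_\star:=K_\star u-K_\ell u\in V_\star$, where $K_\star$ is the discrete solution operator on $\mathcal T_\star$ and $u\in L^2(\Omega)$ is fixed. The plan is the standard discrete reliability argument of Lemma~5.4 of \cite{feischl2014}. We adapt it to the non-coercive-type structure by going through the G\aa rding inequality and a duality (Aubin--Nitsche) argument.

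\emph{Error representation.} Galerkin orthogonality gives $a(K_\star u-K_\ell u,v_\ell)=0$ for all $v_\ell\in V_\ell$. Since $V_\ell\subset V_\star$, the error $e_\star$ is the Galerkin approximation in $V_\star$ of the residual functional
\[
\mathrm{Res}_\ell(v):=(u,v)_0-a(K_\ell u,v),
\]
restricted to $V_\star$. We test with $v_\star=e_\star-I_\ell e_\star$, where $I_\ell:V_\star\to V_\ell$ is a Scott--Zhang type quasi-interpolant chosen so that $(I_\ell v_\star)|_T=v_\star|_T$ on every $T\in\mathcal T_\ell\cap\mathcal T_\star$. Such an operator exists for NVB refinements, as in \cite{stevenson07,cascon2008}: take the degrees of freedom on faces/elements of unrefined patches. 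Then $v_\star$ vanishes outside the patch $\Omega_R$ of refined elements $\mathcal T_\ell\setminus\mathcal T_\star$.

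\emph{Localized bound.} Elementwise integration by parts gives
\[
\mathrm{Res}_\ell(v_\star)=\sum_{T}\bigl(R_T(u,K_\ell u),v_\star\bigr)_{0,T}-\sum_e\bigl(J_e(K_\ell u),v_\star\bigr)_{0,e}.
\]
Only $T\in\mathcal T_\ell\setminus\mathcal T_\star$ (and faces touching them) contribute. Applying Cauchy--Schwarz together with the local approximation and stability properties $h_T^{-1}\|v_\star\|_{0,T}+h_e^{-1/2}\|v_\star\|_{0,e}\lesssim\|\nabla e_\star\|_{0,\omega_T}$ yields
\[
|a(e_\star,e_\star)|=|\mathrm{Res}_\ell(v_\star)|\lesssim \mu_\ell(u,\mathcal T_\ell\setminus\mathcal T_\star)\,\|e_\star\|_V .
\]
Here we used $a(e_\star,I_\ell e_\star)=0$, and that $\mu_\ell(u,T)=\eta_\ell(u,T)$ is evaluated with $w_\ell=K_\ell u$. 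Note that the paper assumes coercivity \eqref{coercive}, so $\|e_\star\|_V^2\lesssim |a(e_\star,e_\star)|$ directly and the claim follows with $C_{\mathrm{drel}}$ depending only on $\gamma$, $\mathcal T_0$ and the coefficients.

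\emph{Main obstacle.} The delicate point is that $\mathrm{Res}_\ell$ also acts through the lower-order terms $\boldsymbol b\cdot\nabla+c$. If one only has a G\aa rding inequality, one gets an extra $\|e_\star\|_0^2$ term. This is where the hypothesis ``mesh size of $\mathcal T_\ell$ sufficiently small'' enters, as in \cite{feischl2014}. By a duality argument on $V_\star$ with the adjoint discrete problem and the regularity \eqref{eqn: regularity}, we bound $\|e_\star\|_0\lesssim h_\ell^{r}\|e_\star\|_V$. This term is then absorbed into the left-hand side for $h_0$ small. Under the assumed coercivity this step is unnecessary, so the main work reduces to constructing $I_\ell$ with the local identity property on unrefined elements and checking that the face terms on the boundary of the refined region are controlled by the estimator on $\mathcal T_\ell\setminus\mathcal T_\star$ (possibly enlarged by one layer, which changes only constants via shape regularity).
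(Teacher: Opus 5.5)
Your proposal is correct and is essentially the argument behind the paper's proof, which simply invokes Lemma~5.4 of \cite{feischl2014}: Galerkin orthogonality, a Scott--Zhang quasi-interpolant that reproduces $e_\star$ on $\mathcal T_\ell\cap\mathcal T_\star$, elementwise integration by parts, and the coercivity \eqref{coercive}. Under that coercivity the duality/absorption step is not needed, and neither is an extra layer of elements, because $v_\star$ vanishes on faces of unrefined elements and all remaining face terms are already contained in $\mu_\ell(u,\mathcal T_\ell\setminus\mathcal T_\star)$.
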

 
Proposition 3.6 of \cite{feischl2014} tell us that the following lemma holds. 
\begin{lemma}\label{l:qoI}
Assume the initial mesh size $h_0$ is small enough.
Then for all refinements $\mathcal T_\star\in \mathbb{T}$ of a triangulation $\mathcal T_\ell\in\mathbb{T}$, 
for any $\epsilon \in (0,1)$, there holds
\begin{equation}\label{eqn:lemma4.1}
\begin{aligned}
\|K_\star u- K_\ell u\|_V^2
&\leq 
\frac{1}{1-\epsilon}\|Ku - K_\ell u\|_V^2 - \| Ku- K_\star u\|_V^2.
\end{aligned}
\end{equation}
\end{lemma}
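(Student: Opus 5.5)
The plan is to prove the quasi-orthogonality estimate \eqref{eqn:lemma4.1} directly, by perturbing the symmetric Pythagoras identity with the non-symmetric part of $a(\cdot,\cdot)$. This is the argument of Proposition 3.6 in \cite{feischl2014}, which we will follow.

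Set $e_\ell := Ku-K_\ell u$, $e_\star := Ku-K_\star u$ and $d := K_\star u-K_\ell u\in V_\star$, so that $e_\ell = e_\star + d$. Write $a(v,w)=a_s(v,w)+k(v,w)$ with
\[
a_s(v,w)=(\boldsymbol A\nabla v,\nabla w)_0,\qquad k(v,w)=(\boldsymbol b\cdot\nabla v+cv,w)_0 .
\]
The first step is Galerkin orthogonality: since $V_\ell\subset V_\star$ (NVB refinements are nested), $a(e_\star,v)=0$ for all $v\in V_\star$, and in particular $a(e_\star,d)=0$.

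The second step is to expand the energy of $e_\ell$:
\[
\|e_\ell\|_V^2=a(e_\star+d,e_\star+d)=\|e_\star\|_V^2+\|d\|_V^2+a(d,e_\star),
\]
using $a(e_\star,d)=0$. Next, rewrite the cross term using the adjoint relation $a(d,e_\star)=\overline{a_s(e_\star,d)}+k(d,e_\star)$, and observe that $a(e_\star,d)=0$ gives $a_s(e_\star,d)=-k(e_\star,d)$. Hence
\[
|a(d,e_\star)|\le |k(e_\star,d)|+|k(d,e_\star)|,
\]
and both terms contain one factor of $e_\star$ carrying no derivative. Here the non-self-adjointness is reduced to lower-order terms, and the quantity actually entering the estimate is $\mathrm{Re}\,a(v,v)$.

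The third and main step is to show that these lower-order terms are small. First we have
\[
|k(e_\star,d)|+|k(d,e_\star)|\lesssim \|e_\star\|_V\,\|d\|_0+\|d\|_V\,\|e_\star\|_0 .
\]
An Aubin--Nitsche duality argument, using the regularity \eqref{eqn: regularity} for the adjoint problem \eqref{eqn:2.12}, gives $\|e_\star\|_0\lesssim h_0^{r}\|e_\star\|_V$. Since $d=e_\ell-e_\star$, the same argument gives $\|d\|_0\le\|e_\ell\|_0+\|e_\star\|_0\lesssim h_0^{r}(\|e_\ell\|_V+\|e_\star\|_V)$. Moreover $\|e_\star\|_V\lesssim\|e_\ell\|_V$, by C\'ea's lemma ($\|e_\star\|_V$ is bounded by a multiple of the best approximation in $V_\star$, hence in $V_\ell$) together with $\|d\|_V\le\|e_\ell\|_V+\|e_\star\|_V$. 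Combining these, we obtain
\[
|a(d,e_\star)|\le C h_0^{r}\|e_\ell\|_V^2 .
\]

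Inserting this into the expansion yields $\|d\|_V^2\le(1+Ch_0^r)\|e_\ell\|_V^2-\|e_\star\|_V^2$. For fixed $\epsilon\in(0,1)$, choosing $h_0$ small enough that $1+Ch_0^r\le (1-\epsilon)^{-1}$ gives \eqref{eqn:lemma4.1}. I expect the main obstacle to be justifying the duality bound uniformly over all refined meshes. On locally refined meshes it should be stated in terms of the initial mesh size $h_0$, or more generally via $\|(K-K_\ell)\|_0\to0$ as in \eqref{eqn:2.9}, rather than a local $h$. If a uniform power of $h_0$ is problematic, the fallback is the compactness argument of \cite{feischl2014}: $\sup_\ell\|e_\ell\|_0/\|e_\ell\|_V\to0$ by the compact embedding $V\hookrightarrow L^2(\Omega)$ together with the duality argument.
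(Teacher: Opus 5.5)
Your argument is correct, but it is not the route behind the result the paper cites. The paper gives no proof of its own and simply invokes Proposition 3.6 of \cite{feischl2014}. As far as I recall, the purpose of that reference is to avoid both elliptic regularity and a fine initial mesh. There the lower-order cross term is controlled by a compactness argument: normalized errors tend weakly to zero, so $\|e_\ell\|_0/\|e_\ell\|_V\to 0$ along the convergent adaptive sequence, via $V\hookrightarrow L^2(\Omega)$. That gives the estimate only for $\ell$ large enough.

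What you wrote is instead the classical duality route. Galerkin orthogonality leaves only $\mathrm{Re}\,a(d,e_\star)$ as the defect in the Pythagoras identity, and the principal part cancels there. The Aubin--Nitsche bound $\|e\|_0\lesssim h_0^r\|e\|_V$ from \eqref{eqn: regularity} then makes the defect $\lesssim h_0^r\|e_\ell\|_V^2$. This buys precisely the form in which the lemma is stated: uniform over every $\mathcal T_\ell\in\mathbb T$ and every refinement $\mathcal T_\star$, with the explicit factor $1+Ch_0^r\le(1-\epsilon)^{-1}$ (so $h_0$ depends on $\epsilon$). The price is the regularity assumption and a fine initial mesh, both of which the paper imposes anyway.

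The obstacle you anticipate does not arise. NVB only shrinks elements, so $\max_{T\in\mathcal T_\star}h_T\le h_0$. The quasi-interpolation bound $\|z-I_\star z\|_1\lesssim(\max_{T\in\mathcal T_\star}h_T)^r\|z\|_{1+r}$ depends only on shape regularity; the paper uses the same kind of bound, $\|u_j-G_\ell u_j\|_0\lesssim h_\ell^r\|u_j-G_\ell u_j\|_V$, in the proof of Theorem~\ref{thm:3.2}. So the compactness fallback is unnecessary.

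Two small remarks. The cancellation $a_s(d,e_\star)=\overline{a_s(e_\star,d)}$ uses that $\boldsymbol A$ is real symmetric. In $k(e_\star,d)$ the underived factor is $d$ rather than $e_\star$; your subsequent bound through $\|d\|_0\le\|e_\ell\|_0+\|e_\star\|_0$ already handles this correctly.
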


\begin{lemma}\label{l:reduction}
Assume the initial mesh size $h_0$ is small enough. Then there exist $\rho_1\in(0,1)$ and $\beta\in (0,+\infty)$ such that $\mathcal{T}_\ell$ and $\mathcal{T}_{\ell+1}$ generated by Algorithm 1 satisfy
 \begin{equation}\label{eqn:lemma4.2}
  \sum_{j\in J}\mu_{\ell+1}( u_j,\mathcal{T}_{\ell+1})^2
  \leq
  \rho_1 \sum_{j\in J}\mu_{\ell}(u_j,\mathcal{T}_\ell)^2
 +
 \beta \sum_{j\in J} \| (K_{\ell+1} - K_{\ell}) u_j\|_V^2 .
 \end{equation}
\end{lemma}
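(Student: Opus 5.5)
The plan is the standard estimator-reduction argument of \cite{cascon2008,feischl2014}, applied to each source problem with right-hand side $f=u_j$ and then summed over $j\in J$. The one new ingredient is transferring the Dörfler property from the combined estimator to $\mu_\ell$ alone, which is where I expect the difficulty.

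\emph{Step 1 (per-element reduction for one $j$).} Fix $j\in J$ and write $w_\ell=K_\ell u_j$, $w_{\ell+1}=K_{\ell+1}u_j$. For $T'\in\mathcal T_{\ell+1}$ with $T'\subset T\in\mathcal T_\ell$, I split
\[
R_{T'}(u_j,w_{\ell+1})=R_{T}(u_j,w_\ell)|_{T'}+\mathcal L(w_\ell-w_{\ell+1})|_{T'},
\]
and treat the jumps analogously. The triangle inequality and Young's inequality with a parameter $\delta>0$ then give
\[
\mu_{\ell+1}(u_j,T')^2\le(1+\delta)\,\widetilde\mu_\ell(u_j,T')^2+(1+\delta^{-1})\bigl(h_{T'}^2\|\mathcal L(w_{\ell+1}-w_\ell)\|_{0,T'}^2+\textstyle\sum_{e\subset\partial T'}h_e\|[\![\boldsymbol A\nabla(w_{\ell+1}-w_\ell)\cdot\nu]\!]\|_{0,e}^2\bigr).
\]
Here $\widetilde\mu_\ell(u_j,T')$ is $\mu_\ell(u_j,T)$ restricted to $T'$ with $h_{T'}$ in place of $h_T$. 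Because $\boldsymbol A,\boldsymbol b,c$ are piecewise smooth on $\mathcal T_0$, the inverse and trace inequalities bound the sum over $T'$ of the second bracket by $C\|w_{\ell+1}-w_\ell\|_V^2$. This is the term $\beta\|(K_{\ell+1}-K_\ell)u_j\|_V^2$ in the statement, with $\beta\simeq 1+\delta^{-1}$.

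\emph{Step 2 (using refinement).} Under NVB every $T\in\mathcal M_\ell$ is bisected at least once, so $h_{T'}\le 2^{-1/d}h_T$ for its children. Summing over $\mathcal T_{\ell+1}$ then gives
\[
\sum_{T'}\widetilde\mu_\ell(u_j,T')^2\le\mu_\ell(u_j,\mathcal T_\ell)^2-\lambda\,\mu_\ell(u_j,\mathcal M_\ell)^2,\qquad \lambda=1-2^{-1/d}.
\]
Summing over $j\in J$, I obtain
\[
\sum_{j}\mu_{\ell+1}(u_j,\mathcal T_{\ell+1})^2\le(1+\delta)\Bigl(\sum_j\mu_\ell(u_j,\mathcal T_\ell)^2-\lambda\sum_j\mu_\ell(u_j,\mathcal M_\ell)^2\Bigr)+\beta\sum_j\|(K_{\ell+1}-K_\ell)u_j\|_V^2 .
\]

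\emph{Step 3 (bulk property; main obstacle).} To close the argument I need $\sum_j\mu_\ell(u_j,\mathcal M_\ell)^2\ge\theta'\sum_j\mu_\ell(u_j,\mathcal T_\ell)^2$ for some $\theta'>0$ independent of $\ell$. Given that, I set $\rho_1=(1+\delta)(1-\lambda\theta')$ and choose $\delta$ small enough that $\rho_1<1$. Theorem~\ref{thm:3.2} only supplies the bulk property \eqref{eqn:3.10} for the combined quantity $\mu+\mu^*$, so this transfer is the crux.

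My first attempt is to obtain the primal bulk property directly from \eqref{eqn:3.8}--\eqref{eqn:3.9} and the higher-order bound \eqref{eqn:3.25b}. These show that $\sum_j\mu_\ell(u_j,\cdot)^2$ and $\sum_j\eta_\ell(u_{j,\ell},\cdot)^2$ agree up to the factors $\tfrac{303}{200},\tfrac{101}{50}$ and an $O(Nh_\ell^{2r})$ perturbation. It would therefore suffice that the computable primal part of the marked set carries a fixed fraction of the computable primal estimator. If Algorithm~1's combined marking does not guarantee this (it need not, when the adjoint estimator dominates), my fallback is different. Using efficiency \eqref{eqn:eff_a} and reliability \eqref{eqn:rel_a}, together with \eqref{eqn:3.3}--\eqref{eqn:3.4} and the equivalence $\widehat\delta(R(E),R(E_\ell))\simeq\widehat\delta(R(E_*),R(E_{\ell*}))$ from \cite{yang2024}, I would show
\[
\sum_j\mu^*_\ell(u_j^*,\mathcal T_\ell)^2\lesssim\sum_j\mu_\ell(u_j,\mathcal T_\ell)^2 ,
\]
up to oscillation absorbed for small $h_0$. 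Combined with \eqref{eqn:3.10} this gives $\sum_j\mu_\ell(u_j,\mathcal M_\ell)^2\ge\tilde\theta\,C^{-1}\sum_j\mu_\ell(u_j,\mathcal T_\ell)^2-\sum_j\mu^*_\ell(u^*_j,\mathcal M_\ell)^2$.

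This route only closes if the adjoint contribution on $\mathcal M_\ell$ can be controlled, which is exactly the delicate point of the proof. If that control fails, the stated primal-only reduction does not follow from combined marking without additional structure linking the primal and adjoint eigenspaces.
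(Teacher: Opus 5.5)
Your Steps 1 and 2 are correct and coincide with the standard estimator-reduction argument that the paper's one-line proof cites (Corollary 3.4 of Casc\'on et al., Lemma 3.1 of Feischl et al., Lemma 6.10 of Boffi et al.). The gap is Step 3, which you leave open, and your diagnosis is right. The only bulk property available, \eqref{eqn:3.10}, is for the combined quantity $\sum_{j\in J}(\mu_\ell(u_j,\cdot)^2+\mu^*_\ell(u^*_j,\cdot)^2)$, and D\"orfler marking of a sum gives no fixed fraction for one summand.

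Your fallback cannot repair this. The equivalence of primal and adjoint gaps you attribute to \cite{yang2024} is not established in the paper; primal and adjoint eigenfunctions of a convection--diffusion operator generally concentrate in different regions. Even granting a global bound $\sum_{j}\mu^*_\ell(u^*_j,\mathcal T_\ell)^2\le C\sum_{j}\mu_\ell(u_j,\mathcal T_\ell)^2$, \eqref{eqn:3.10} only gives $\sum_{j}\mu_\ell(u_j,\mathcal M_\ell)^2\ge\bigl(\tilde\theta-(1-\tilde\theta)C\bigr)\sum_{j}\mu_\ell(u_j,\mathcal T_\ell)^2$. This is vacuous unless $C<\tilde\theta/(1-\tilde\theta)$, which is hopeless because Theorem~\ref{thm:4.5} needs $\theta$, hence $\tilde\theta$, small.

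In fact the primal-only inequality cannot be expected to hold in general. Suppose that at some step the combined D\"orfler set and its NVB closure lie where the primal indicators are negligible, for instance because the adjoint estimator has a few dominant indicators near its own singularity. Then Lemma~\ref{lem:drel} makes $\|(K_{\ell+1}-K_\ell)u_j\|_V$ negligible. Meanwhile, stability on the unrefined elements keeps $\sum_j\mu_{\ell+1}(u_j,\mathcal T_{\ell+1})^2$ essentially equal to $\sum_j\mu_\ell(u_j,\mathcal T_\ell)^2$, so no fixed $\rho_1<1$ and $\beta$ work.

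The fix is to change what you prove rather than to look for a cleverer transfer. Run Steps 1--2 verbatim also for the adjoint family $w^*_\ell=K_{\ell*}u^*_j$. The residual $R^*_{T'}$ splits the same way with $\mathcal L^*$ in place of $\mathcal L$, and piecewise smoothness of $\boldsymbol b$ controls the term $\nabla\cdot(\overline{\boldsymbol b}\,w^*)$. Add the primal and adjoint estimates, and only then insert \eqref{eqn:3.10} for the combined marked quantity. With your reduction factor $\lambda=1-2^{-1/d}$ this gives
\begin{align*}
\sum_{j\in J}\bigl(\mu_{\ell+1}(u_j,\mathcal T_{\ell+1})^2+\mu^*_{\ell+1}(u^*_j,\mathcal T_{\ell+1})^2\bigr)
&\le(1+\delta)(1-\lambda\tilde\theta)\sum_{j\in J}\bigl(\mu_\ell(u_j,\mathcal T_\ell)^2+\mu^*_\ell(u^*_j,\mathcal T_\ell)^2\bigr)\\
&\quad+\beta\sum_{j\in J}\bigl(\|(K_{\ell+1}-K_\ell)u_j\|_V^2+\|(K_{(\ell+1)*}-K_{\ell*})u^*_j\|_V^2\bigr).
\end{align*}
Since $\tilde\theta\ge\theta/4$, you get $\rho_1=(1+\delta)(1-\lambda\tilde\theta)<1$ once $\delta$ is small.

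This combined inequality is what the argument cited in the paper actually yields, because those references need a bulk property for the very estimator being reduced, and here that is \eqref{eqn:3.10}. It is also the only form used afterwards. In the proof of Lemma~\ref{p:contraction}, both $\mu_\ell^2$ and $e_\ell^2$ contain primal and adjoint parts, and the argument also uses the adjoint analogue of Lemma~\ref{l:qoI}.
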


\begin{proof}
By the arguments as in Corollary 3.4 of \cite{cascon2008}, Lemma 3.1 of \cite{feischl2014},  and Lemma 6.10 of \cite{boffi2017}, we can deduce the lemma.
\end{proof}

\begin{lemma}\label{p:contraction}
 Assume the initial mesh size $h_0$ is chosen sufficiently small,
 there exist $\rho_2\in(0,1)$ and $\beta\in(0,+\infty)$ such that
 the term
\begin{equation}\label{e:xielldef}
\begin{aligned}
&\xi_\ell^2=  \sum_{j\in J}(\mu_\ell(u_j,\mathcal{T}_\ell)^2 + \mu^*_\ell(u^*_j,\mathcal{T}_\ell)^2)+ \beta \sum_{j\in J} (\| (K - K_\ell) u_j\|_V^2 + \| (K_* - K_{\ell*}) u^*_j\|_V^2
\end{aligned}
\end{equation}
 satisfies
 \begin{equation}\label{e:xielldef_a}
   \xi_{\ell+1}^2 \leq \rho_2 \xi_{\ell} ^2
        \quad \forall \ell\in\mathbb{N}.
 \end{equation}
\end{lemma}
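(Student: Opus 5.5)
We follow the standard contraction argument of \cite{cascon2008} (see also \cite{feischl2014} and Theorem 6.11 of \cite{boffi2017}). It combines three ingredients for the primal and adjoint parts: the estimator reduction of Lemma~\ref{l:reduction}, the quasi-orthogonality of Lemma~\ref{l:qoI}, and reliability \eqref{eqn:rel_a}--\eqref{eqn:rel_b}. The bulk property \eqref{eqn:3.10} with $\tilde\theta\ge\theta/4$ from Theorem~\ref{thm:3.2} supplies the uniform reduction.

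First we fix the form of the estimator reduction. The proof of Lemma~\ref{l:reduction} in \cite{cascon2008} in fact gives, for any $\delta>0$,
\[
\sum_{j}\mu_{\ell+1}(u_j,\mathcal T_{\ell+1})^2\le(1+\delta)\Big(\sum_j\mu_\ell(u_j,\mathcal T_\ell)^2-\lambda\sum_j\mu_\ell(u_j,\mathcal M_\ell)^2\Big)+C_\delta\sum_j\|(K_{\ell+1}-K_\ell)u_j\|_V^2,
\]
with $\lambda=1-2^{-1/d}$. We add the analogous adjoint inequality and use \eqref{eqn:3.10} on the marked part. This yields
\[
\mu_{\ell+1}^2\le(1+\delta)(1-\lambda\tilde\theta)\mu_\ell^2+C_\delta\,\Delta_\ell,
\]
where $\mu_\ell^2$ is the combined sum of primal and adjoint estimators and $\Delta_\ell=\sum_j(\|(K_{\ell+1}-K_\ell)u_j\|_V^2+\|(K_{\ell+1,*}-K_{\ell*})u_j^*\|_V^2)$. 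Since $\tilde\theta\ge\theta/4$ uniformly in $\ell$, we can choose $\delta$ so that $\rho_1:=(1+\delta)(1-\lambda\theta/4)<1$. We then set $\beta:=C_\delta$.

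Next we write $e_\ell^2=\sum_j(\|(K-K_\ell)u_j\|_V^2+\|(K_*-K_{\ell*})u_j^*\|_V^2)$. Applying Lemma~\ref{l:qoI} with $\mathcal T_\star=\mathcal T_{\ell+1}$ to each $u=u_j$ and to the adjoint analogue gives $\Delta_\ell\le(1-\epsilon)^{-1}e_\ell^2-e_{\ell+1}^2$. Hence
\[
\xi_{\ell+1}^2=\mu_{\ell+1}^2+\beta e_{\ell+1}^2\le\rho_1\mu_\ell^2+\beta(1-\epsilon)^{-1}e_\ell^2 .
\]
The $\Delta_\ell$ term is absorbed exactly because its coefficient in the reduction equals $\beta$. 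If a mismatch between the two constants arises, we handle it by scaling the $e$-part of $\xi_\ell$ with a suitable multiple of $\beta$.

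To close the argument, we split $\rho_1\mu_\ell^2=(\rho_1+\gamma)\mu_\ell^2-\gamma\mu_\ell^2$ with a small $\gamma>0$. By reliability \eqref{eqn:rel_a}--\eqref{eqn:rel_b}, $-\gamma\mu_\ell^2\le-\gamma C_{\mathrm{rel}}^{-2}e_\ell^2$. Therefore
\[
\xi_{\ell+1}^2\le(\rho_1+\gamma)\mu_\ell^2+\beta\Big((1-\epsilon)^{-1}-\gamma C_{\mathrm{rel}}^{-2}\beta^{-1}\Big)e_\ell^2 .
\]
We choose $\gamma$ with $\rho_1+\gamma<1$, and then $\epsilon$ small enough that $(1-\epsilon)^{-1}-\gamma/(C_{\mathrm{rel}}^2\beta)<1$. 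Then $\rho_2:=\max\{\rho_1+\gamma,\,(1-\epsilon)^{-1}-\gamma/(C_{\mathrm{rel}}^2\beta)\}<1$.

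The main obstacle is the first step, namely making the estimator reduction uniform for the cluster. The theoretical estimator $\mu_\ell(u_j,\cdot)$ on $\mathcal T_{\ell+1}$ uses the same data $u_j$ but a different discrete solution $K_{\ell+1}u_j$. The difference in the residuals is controlled by $\|(K_{\ell+1}-K_\ell)u_j\|_V$ through inverse estimates, and the piecewise smooth coefficients contribute only to the constant. The marking information is transferred from the computable estimator via Theorem~\ref{thm:3.2}, which requires $h_0$ small so that the term $CNh_\ell^{2r}$ in \eqref{eqn:3.11} does not spoil $\tilde\theta\ge\theta/4$.
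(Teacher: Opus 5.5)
Your proposal is correct and follows essentially the same route as the paper's proof: the combined primal–adjoint estimator reduction, then quasi-orthogonality (Lemma~\ref{l:qoI}) to absorb $\Delta_\ell$, then reliability \eqref{eqn:rel_a}--\eqref{eqn:rel_b} to move part of the $e_\ell^2$ weight onto $\mu_\ell^2$. Your $\gamma$ is exactly the paper's $\epsilon' C_{\mathrm{rel}}^2\beta$, and the only difference is that you derive $\rho_1=(1+\delta)(1-\lambda\theta/4)$ explicitly from \eqref{eqn:3.10}, while the paper takes this from Lemma~\ref{l:reduction} and its adjoint analogue.
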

\begin{proof}
The proof uses the arguments as in Proposition 6.11 of \cite{boffi2017}. 
Let
\begin{equation*}
\begin{aligned}
&\mu_{\ell}^2
        =\sum_{j\in J} (\mu_\ell(u_j,\mathcal{T}_\ell)^2 + \mu^*_\ell(u^*_j,\mathcal{T}_\ell)^2),\\
&e^2_\ell = \sum_{j\in J} (\| (K - K_\ell) u_j\|_V^2 + \| (K_* - K_{\ell*}) u^*_j\|_V^2).
\end{aligned}
\end{equation*}
From (\ref{eqn:lemma4.1}) and (\ref{eqn:lemma4.2}), we have
\begin{equation*}
 \mu_{\ell+1}^2
  + \beta e_{\ell+1}^2
  \leq
  \rho_1 \mu_{\ell}^2
  +
  \frac{\beta}{1-\epsilon} e_\ell^2,
\end{equation*}
from which, (\ref{eqn:rel_a}), and (\ref{eqn:rel_b}), we have
 \begin{equation*}
\mu_{\ell+1}^2
  + \beta e_{\ell+1}^2
  \leq
  (\rho_1 + \epsilon^{\prime} {C^2_{\mathrm{rel}}} \beta) \mu_{\ell}^2
  +
 \beta(\frac{1}{1-\epsilon}-\epsilon^{\prime})e_\ell^2\quad\forall \epsilon^{\prime}\in (0,1).
 \end{equation*}
 We take
\begin{equation*}
\rho_2 = \max \{
 \rho_1 + \epsilon^{\prime} {C^2_{\mathrm{rel}}} \beta,
 \frac{1}{1-\epsilon}-\epsilon^{\prime} 
\},
\end{equation*}
so that
\begin{equation*}
\mu_{\ell+1}^2 + \beta e_{\ell+1}^2
\leq 
\rho_2 (\mu_{\ell}^2+ \beta e_\ell^2).
\end{equation*}
Choosing $\epsilon^{\prime}$ small enough and $\epsilon$ small enough, we obtain $\rho_2 < 1$.
\end{proof} 

\begin{theorem}\label{thm:4.5}
Assume the initial mesh size $h_0$ and the bulk parameter $\theta$ are small
enough, and $|(\mathsf{W},\mathsf{W}^*)|_{\mathcal{A}_s}<\infty$. Then there holds
\begin{equation}\label{eqn:4.2}
\sum_{j\in J} (\| (K - K_\ell) u_j\|_V + \| (K_* - K_{\ell*}) u^*_j\|_V)
\lesssim (\# \mathcal{T}_\ell - \# \mathcal{T}_0 )^{-s} |(\mathsf{W},\mathsf{W}^*)|_{\mathcal{A}_s}.
\end{equation}
\end{theorem}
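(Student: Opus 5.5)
We follow the standard route of \cite{cascon2008,feischl2014,gallistl2015,boffi2017}. It has three ingredients: optimality of D\"orfler marking, a comparison with an optimal mesh, and the closure estimate for NVB combined with the contraction of Lemma~\ref{p:contraction}.

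\textbf{Step 1 (optimality of D\"orfler marking).} Let $\mathcal T_\star$ be any refinement of $\mathcal T_\ell$ such that
\[
e_\star^2:=\sum_{j\in J}\bigl(\|(K-K_\star)u_j\|_V^2+\|(K_*-K_{\star*})u_j^*\|_V^2\bigr)\le \kappa\, e_\ell^2
\]
for a small $\kappa\in(0,1)$. We show that $\mathcal R:=\mathcal T_\ell\setminus\mathcal T_\star$ satisfies the bulk criterion \eqref{eqn:3.10} for the theoretical estimator $\mu$ with some $\theta_\star\in(0,1)$. By the Pythagoras-type identity of Lemma~\ref{l:qoI} (with small $\epsilon$), applied to each $u_j$ and $u_j^*$,
\[
\sum_{j}\|(K_\star-K_\ell)u_j\|_V^2+\ldots\ \ge\ (1-\epsilon)e_\ell^2-e_\star^2\ \ge\ (1-\epsilon-\kappa)e_\ell^2 .
\]
The efficiency bounds \eqref{eqn:eff_a}--\eqref{eqn:eff_b} give $e_\ell^2\ge C_{\rm eff}^{-2}\mu_\ell^2$, and discrete reliability (Lemma~\ref{lem:drel}) gives the left-hand side $\le C_{\rm drel}^2\sum_j(\mu_\ell(u_j,\mathcal R)^2+\mu_\ell^*(u_j^*,\mathcal R)^2)$. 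Together these yield the bulk property with $\theta_\star=(1-\epsilon-\kappa)/(C_{\rm drel}^2C_{\rm eff}^2)$.

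Theorem~\ref{thm:3.2} transfers bulk properties between $\mu$ and the computable $\eta$. Its proof only uses \eqref{eqn:3.8}, \eqref{eqn:3.9} and the higher-order bounds \eqref{eqn:3.25b}, \eqref{eqn:3.25c}, so the converse direction also holds: a set fulfilling the $\mu$-bulk criterion with $\theta_\star$ fulfils the $\eta$-bulk criterion \eqref{algorithm} with a parameter $\gtrsim\theta_\star$. We therefore take $\theta$ below this threshold; this is the smallness assumption on $\theta$ in the theorem. Since $\mathcal M_\ell$ has minimal cardinality, $\#\mathcal M_\ell\le\#\mathcal R\le\#\mathcal T_\star-\#\mathcal T_\ell$.

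\textbf{Step 2 (construction of $\mathcal T_\star$).} Set $\varepsilon^2:=\kappa e_\ell^2$. By the definition of $|(\mathsf W,\mathsf W^*)|_{\mathcal A_s}$ there is $\mathcal T_\varepsilon\in\mathbb T(m)$ with $m\simeq (\varepsilon^{-1}|(\mathsf W,\mathsf W^*)|_{\mathcal A_s})^{1/s}$ and $\Xi_{\mathcal T_\varepsilon}+\Xi^*_{\mathcal T_\varepsilon}\le c\,\varepsilon$. Take the overlay $\mathcal T_\star=\mathcal T_\varepsilon\oplus\mathcal T_\ell$, which satisfies $\#\mathcal T_\star-\#\mathcal T_\ell\le\#\mathcal T_\varepsilon-\#\mathcal T_0$. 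By C\'ea's lemma and the quasi-orthogonality, $\|(K-K_\star)u_j\|_V\lesssim\inf_{\chi\in V_{\mathcal T_\star}}\|Ku_j-\chi\|_V$, and the same for the adjoint. This is the stated equivalence between $\Xi$ and $e$, and it gives $e_\star^2\le\kappa e_\ell^2$ once $c$ is small. Hence
\[
\#\mathcal M_\ell\lesssim e_\ell^{-1/s}|(\mathsf W,\mathsf W^*)|_{\mathcal A_s}^{1/s}.
\]

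\textbf{Step 3 (summation).} The NVB closure estimate gives $\#\mathcal T_\ell-\#\mathcal T_0\lesssim\sum_{k<\ell}\#\mathcal M_k$. Lemma~\ref{p:contraction} gives $\xi_\ell\le\rho_2^{(\ell-k)/2}\xi_k$. By \eqref{eqn:rel_a}--\eqref{eqn:eff_b} we have $\xi_k\simeq e_k$, so $e_k^{-1/s}\lesssim\rho_2^{(\ell-k)/(2s)}e_\ell^{-1/s}$. Summing the geometric series,
\[
\#\mathcal T_\ell-\#\mathcal T_0\lesssim |(\mathsf W,\mathsf W^*)|_{\mathcal A_s}^{1/s}\,e_\ell^{-1/s}.
\]
Rearranging gives $e_\ell\lesssim(\#\mathcal T_\ell-\#\mathcal T_0)^{-s}|(\mathsf W,\mathsf W^*)|_{\mathcal A_s}$, and since the sum of norms in \eqref{eqn:4.2} is at most $\sqrt{2N}\,e_\ell$, this is \eqref{eqn:4.2}.

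\textbf{Main obstacle.} The delicate point is Step 1. The theoretical estimator $\mu_\ell(u_j,\cdot)$ refers to the fixed functions $u_j$, while the algorithm marks with $\eta_\ell(u_{j,\ell},\cdot)$. The transfer of the bulk property must therefore use the local equivalences of Theorem~\ref{thm:3.2} restricted to the subset $\mathcal R$. The perturbations $\vartheta_\ell$ are controlled only globally, by \eqref{eqn:3.25b}, and this control suffices only because it is of higher order $Nh_\ell^{2r}$, which we absorb using the smallness of $h_0$. We also need a uniform constant in Lemma~\ref{lem:drel} for both the primal and adjoint problems.
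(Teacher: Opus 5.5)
Your proposal is correct and follows essentially the paper's route: an overlay with an $\mathcal{A}_s$-optimal mesh, then discrete reliability plus efficiency to show that $\mathcal{T}_\ell\setminus\widehat{\mathcal{T}}_\ell$ carries a fixed fraction of $\mu_\ell$, then transfer to the computable bulk criterion via \eqref{eqn:3.8}, \eqref{eqn:3.9}, \eqref{eqn:3.25b}, \eqref{eqn:3.25c}, and finally NVB closure with the contraction of Lemma~\ref{p:contraction} and a geometric series. Using C\'ea's lemma on the overlay instead of Lemma~\ref{l:qoI} is harmless, and your explicit $\mu\Rightarrow\eta$ bulk transfer is the direction actually needed, whereas the paper appeals to \eqref{eqn:3.10}, which states the converse.

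One slip: Lemma~\ref{l:qoI} only bounds $\|(K_\star-K_\ell)u\|_V^2$ from above, so your lower bound in Step~1 must come from the triangle inequality, $\|(K_\star-K_\ell)u\|_V^2\ge\frac12\|(K-K_\ell)u\|_V^2-\|(K-K_\star)u\|_V^2$, as in the paper. This gives $\theta_\star=(\frac12-\kappa)/(C_{\mathrm{drel}}^2C_{\mathrm{eff}}^2)$ and requires $\kappa<\frac12$.
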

\begin{proof}
We follow the lines of the proofs of Theorem 3.1 of \cite{gallistl2015} and Theorem 4.1 of \cite{boffi2017}.\\
\indent For $0<\tau\leq |(\mathsf{W},\mathsf{W}^*)|_{\mathcal{A}_s}^2/\xi_0^2$, we define $\varepsilon(\ell)=\sqrt{\tau}\,\xi_\ell$ (see (\ref{e:xielldef}) for the definition of $\xi_\ell$ and let $\xi_0\neq0$). 
Let $N(\ell)\in\mathbb{N}$ be minimal with the property
\begin{equation*}
  |(\mathsf{W},\mathsf{W}^*)|_{\mathcal{A}_s}^2
 \leq \varepsilon(\ell)^2\,N(\ell)^{2s}.
\end{equation*}
It is clear that $N(\ell)>1$, otherwise
\begin{equation*}
|(\mathsf{W},\mathsf{W}^*)|_{\mathcal{A}_s}\le\varepsilon(\ell),
\end{equation*}
but this and the definition of $\varepsilon(\ell)$ contradict with (\ref{e:xielldef}).\\
\indent From the minimality of $N(\ell)$ it turns out that
\begin{equation}\label{e:optNellBound}
N(\ell)
\leq 2 |(\mathsf{W},\mathsf{W}^*)|_{\mathcal{A}_s}^{1/s}
            \varepsilon(\ell)^{-1/s}
\quad\text{for all }\ell\in\mathbb{N}_0.
\end{equation}
Let $\widetilde{\mathcal{T}}_\ell\in\mathbb{T}$ denote the optimal triangulation of cardinality
\begin{equation*}
 \# \widetilde{\mathcal{T}}_\ell
 \leq \# \mathcal{T}_0 + N(\ell)
\end{equation*}
in the sense that the discrete solution operators
$\widetilde K_\ell$ and $\widetilde K_{\ell*}$ defined on the mesh $\widetilde{\mathcal{T}}_\ell$
satisfies
\begin{equation} \label{e:tildeToptimal}
  \sum_{j\in J} \| (K - \widetilde K_\ell) u_j\|_V^2 
+ \| (K_* - \widetilde K_{\ell*}) u^*_j\|_V^2 
\leq 
    N(\ell)^{-2s} |(\mathsf{W},\mathsf{W}^*)|_{\mathcal{A}_s}^2
\leq \varepsilon(\ell)^2.
\end{equation}
Let us consider the overlay $\widehat{\mathcal{T}}_\ell$, which is the smallest common
refinement of $\mathcal{T}_\ell$ and $\widetilde{\mathcal{T}}_\ell$. 
Thanks to Lemma 3.7 of \cite{cascon2008}, we have
\begin{equation}\label{e:overlayProperty}
\#(\mathcal{T}_\ell \setminus\widehat{\mathcal{T}}_\ell)\le
  \# \widehat{\mathcal{T}}_\ell  
 - \# \mathcal{T}_\ell
\leq 
   \# \widetilde{\mathcal{T}}_\ell -
        \# \mathcal{T}_0
\leq 
   N(\ell),
\end{equation}
which together with \eqref{e:optNellBound}
yielding
\begin{equation}\label{e:OptEst1}
  \# (\mathcal{T}_\ell \setminus\widehat{\mathcal{T}}_\ell)
  \leq N(\ell)
 \leq 2 |(\mathsf{W},\mathsf{W}^*)|_{\mathcal{A}_s}^{1/s}
            \varepsilon(\ell)^{-1/s}.
\end{equation}
Let $\widehat K_\ell$ and $\widehat K_{\ell*}$ denote the discrete solution operators defined on the mesh $\widehat{\mathcal{T}}_\ell$, respectively.

From (\ref{eqn:lemma4.1})
with $\mathcal{T}_\ell=\widetilde{\mathcal{T}}_\ell$ and $\mathcal{T}_\star=\widehat{\mathcal{T}}_\ell$, we have
\begin{equation*}
\begin{aligned}
&
\sum_{j\in J}  (\| (K - \widehat K_\ell) u_j \|_V^2 + \| (K_* - \widehat K_{\ell*}) u^*_j \|_V^2)\\
&\qquad \leq
\frac{1}{1-\epsilon}
\sum_{j\in J}  (\| (K - \widetilde K_\ell) u_j \|_V^2 + \| (K_* - \widetilde K_{\ell*}) u^*_j \|_V^2),
\end{aligned}
\end{equation*}
from which, $1/(1-\epsilon) \leq 2$, and \eqref{e:tildeToptimal}, we obtain
\begin{equation}\label{e:step2}
 \sum_{j\in J} (\| (K - \widehat K_\ell) u_j \|_V^2 + \| (K_* - \widehat K_{\ell*}) u^*_j \|_V^2)
 \leq  2\varepsilon(\ell)^2.
\end{equation}

Now, we show there exists a constant $C_1$ such that
\begin{equation}
\sum_{j\in J}
 (\mu_\ell(u_j,\mathcal{T}_\ell)^2 + \mu^*_\ell(u^*_j,\mathcal{T}_\ell)^2)
   \leq
    C_1
    \sum_{j\in J}
    (\mu_\ell(u_j,\mathcal{T}_\ell\setminus\widehat{\mathcal{T}}_\ell)^2 + \mu^*_\ell(u^*_j,\mathcal{T}_\ell\setminus\widehat{\mathcal{T}}_\ell)^2).
\label{eq:keyargument}
\end{equation}
From the triangle inequality and (\ref{eq:drel}), we obtain for any $j\in J$,
\begin{equation*}
\begin{split}
\| (K - K_\ell) u_j \|_V^2
&\leq
2 \| (K - \widehat K_\ell) u_j \|_V^2
 + 2 \| (\widehat K_\ell - K_\ell) u_j \|_V^2
\\
&\leq
2 \| (K - \widehat K_\ell) u_j\|_V^2
+
2 {C^2_{\mathrm{drel}}} \mu_\ell(u_j, \mathcal{T}_\ell\setminus\widehat{\mathcal{T}}_\ell)^2,
\end{split}
\end{equation*}
and 
\begin{equation*}
\begin{split}
\| (K_* - K_{\ell*}) u^*_j \|_V^2
&\leq
2 \| (K_* - \widehat K_{\ell*}) u^*_j \|_V^2
 + 2 \| (\widehat K_{\ell*} - K_{\ell*}) u^*_j \|_V^2
\\
&\leq
2 \| (K_* - \widehat K_{\ell*}) u^*_j\|_V^2
+
2 {C^2_{\mathrm{drel}}} \mu^*_\ell(u^*_j, \mathcal{T}_\ell\setminus\widehat{\mathcal{T}}_\ell)^2.
\end{split}
\end{equation*}
Assuming that $h_0$ is sufficiently small, this leads to some constant $C_2$ such that with
\eqref{e:step2} it follows
\begin{equation*}
\begin{aligned}
&\sum_{j\in J}
(\| (K - K_\ell) u_j \|_V^2 + \| (K_* - K_{\ell*}) u^*_j \|_V^2)\\
&\qquad \leq
C_2 \varepsilon(\ell)^2
+
C_2 {C^2_{\mathrm{drel}}} \sum_{j\in J}(\mu_\ell(u_j,\mathcal{T}_\ell \setminus \widehat{\mathcal{T}}_\ell)^2 + \mu^*_\ell(u^*_j,\mathcal{T}_\ell\setminus\widehat{\mathcal{T}}_\ell)^2).
\end{aligned}
\end{equation*}

Let $C_{\mathrm{eq}}$ denote the constant of
$C_2\xi_\ell^2 \leq C_{\mathrm{eq}} \sum_{j\in J} (\mu_\ell(u_j,\mathcal{T}_\ell)^2 + \mu^*_\ell( u^*_j,\mathcal{T}_\ell)^2)$.
From (\ref{eqn:eff_a}),  (\ref{eqn:eff_b}),  the definition of $\varepsilon(\ell)$, and the above estimates, we deduce
\begin{equation*}
 \begin{aligned}
 &C_{\mathrm{eff}}^{-2}
   \sum_{j\in J} (\mu^2_\ell(u_j,\mathcal{T}_\ell) + \mu^*_\ell(u^*_j,\mathcal{T}_\ell)^2)\\
 &\leq
 C_2 \varepsilon(\ell)^2
+
C_2 {C^2_{\mathrm{drel}}} \sum_{j\in J}(\mu^2_\ell(u_j,\mathcal{T}_\ell\setminus\widehat{\mathcal{T}}_\ell)
+ \mu^*_\ell(u^*_j,\mathcal{T}_\ell\setminus\widehat{\mathcal{T}}_\ell)^2)
 \\
 &\leq
    \tau C_{\mathrm{eq}}
    \sum_{j\in J}(\mu^2_\ell(u_j,\mathcal{T}_\ell)+\mu^*_\ell(u^*_j,\mathcal{T}_\ell)^2)
 +
C_2 {C^2_{\mathrm{drel}}} \sum_{j\in J}(\mu^2_\ell(u_j,\mathcal{T}_\ell\setminus\widehat{\mathcal{T}}_\ell)
+ \mu^*_\ell(u^*_j,\mathcal{T}_\ell\setminus\widehat{\mathcal{T}}_\ell)^2).
 \end{aligned}
\end{equation*}
Letting
$C_1 = 
  (C_{\mathrm{eff}}^{-2}-\tau C_{\mathrm{eq}})^{-1}
  C_2{C^2_{\mathrm{drel}}} >0
$, we
obtain~\eqref{eq:keyargument}.

Thanks to (\ref{eqn:3.11}), we choose
\begin{equation*}
0<\theta\leq \frac{1}{4C_1}.
\end{equation*}

Select $\mathcal{M}_\ell \subseteq \mathcal{T}_\ell$ in step (iii) of Algorithm 1  with minimal
cardinality such that 
\begin{equation*}
\sum_{j\in J}(\eta_{\ell}(u_{j,\ell}, \mathcal M_\ell)^2 + \eta^*_{\ell}(u^*_{j,\ell}, \mathcal M_\ell)^2)
\geq
\theta\sum_{j\in J}(\eta_{\ell}(u_{j,\ell}, \mathcal{T}_\ell)^2 + \eta^*_{\ell}(u^*_{j,\ell}, \mathcal{T}_\ell)^2).
\end{equation*}
Estimate~\eqref{eq:keyargument} and the definition of $\theta$
imply together with (\ref{eqn:3.10})
that also $\mathcal{T}_\ell\setminus\widehat{\mathcal{T}}_\ell$
satisfies the bulk criterion, that is
\begin{equation*}
\sum_{j\in J}(\eta_{\ell}(u_{j,\ell}, \mathcal{T}_\ell\setminus\widehat{\mathcal{T}}_\ell)^2 + \eta^*_{\ell}(u^*_{j,\ell}, \mathcal{T}_\ell\setminus\widehat{\mathcal{T}}_\ell)^2)
\geq
\theta\sum_{j\in J} (\eta_{\ell}(u_{j,\ell}, \mathcal{T}_\ell)^2 + \eta^*_{\ell}(u^*_{j,\ell}, \mathcal{T}_\ell)^2).
\end{equation*}
The minimality of $\mathcal{M}_\ell$ and
\eqref{e:OptEst1} show that
\begin{equation}\label{e:minimalMl}
  \# \mathcal M_\ell
  \leq
  \# (\mathcal{T}_\ell\setminus\widehat{\mathcal{T}}_\ell)
  \leq
  2 |(\mathsf{W},\mathsf{W}^*)|_{\mathcal A_s}^{1/s}
         \tau^{-1/(2s)} \xi_\ell^{-1/s}.
\end{equation}
Thanks to \cite{binev2004,stevenson2008}, there exists a constant $C_{\mathrm{BDV}}$ such that
\begin{equation}\label{thm4.4_formula_a}
 \begin{aligned}
  \# \mathcal{T}_\ell
   - \# \mathcal{T}_0
   &\leq
   C_{\mathrm{BDV}}
    \sum_{k=0}^{\ell-1} \# \mathcal M_k
\leq
   2C_{\mathrm{BDV}}   
   |(\mathsf{W},\mathsf{W}^*)|_{\mathcal{A}_s}^{1/s}  
         \tau^{-1/(2s)} 
    \sum_{k=0}^{\ell-1}\xi_k^{-1/s}.
 \end{aligned}
\end{equation}
From (\ref{e:xielldef_a}), we have
$\xi_\ell^2 \leq \rho_2^{\ell-k} \xi_k^2$ for 
$k=0,\ldots,\ell$.
Since $\rho_2<1$, a geometric series argument leads to
\begin{equation}\label{thm4.4_formula_b}
 \sum_{k=0}^{\ell-1} \xi_k ^{-1/s}
 \leq
 \xi_\ell ^{-1/s} 
   \sum_{k=0}^{\ell-1} \rho_2^{(\ell-k)/(2s)}
 \leq
 \xi_\ell ^{-1/s} 
       \rho_2^{1/(2s)} \left/ \left(1-\rho_2^{1/(2s)}\right)\right. .
\end{equation}
From (\ref{thm4.4_formula_a}) and (\ref{thm4.4_formula_b}), we arrive at
\begin{equation*}
 \begin{aligned}
   &
     \# \mathcal{T}_\ell
    - \# \mathcal{T}_0 
   \leq
      2C_{\mathrm{BDV}}   
      |(\mathsf{W},\mathsf{W}^*)|_{\mathcal A_s}^{1/s} 
         \tau^{-1/(2s)}  \xi_\ell ^{-1/s} 
       \rho_2^{1/(2s)} \left/ \left(1-\rho_2^{1/(2s)}\right)\right..
 \end{aligned}
\end{equation*}
From (\ref{eqn:rel_a}), (\ref{eqn:rel_b}), (\ref{eqn:eff_a}), and (\ref{eqn:eff_b}), we obtain the equivalence between $\xi_\ell^2$ and the error 
 $\sum_{j\in J} (\| (K - K_\ell) u_j\|_V^2 + \| (K_* - K_{\ell*}) u^*_j\|_V^2)$. 
 Thus the assertion is obtained.
\end{proof} 

\begin{theorem}\label{c:opt}
Under the conditions of Theorem \ref{thm:4.5}, assuming that the condition (C1) or (C2) is valid, there hold that
\begin{align}
\label{eqn:3.28}
&\widehat{\delta}(R(E),R(E_{\ell}))\lesssim (\# \mathcal T_{\ell} - \# \mathcal T_{0})^{-s}|(\mathsf{W},\mathsf{W}^*)|_{\mathcal{A}_s},\\
\label{eqn:3.29}
&\widehat{\delta}(R(E_*),R(E_{\ell*}))\lesssim (\# \mathcal T_{\ell} - \# \mathcal T_{0})^{-s}|(\mathsf{W},\mathsf{W}^*)|_{\mathcal{A}_s},\\
\label{eqn:3.30}
&|\widehat{\lambda} - \widehat{\lambda}_{\ell}|  \lesssim N^2(\# \mathcal T_{\ell} - \# \mathcal T_{0})^{-2s} |(\mathsf{W},\mathsf{W}^*)|^2_{\mathcal{A}_s}.
\end{align}
When the eigenvalue cluster $\{\lambda_{j}\}_{j\in J}$ is composed of non-defective eigenvalues, then
\begin{align}
\label{eqn:3.31}
&|\lambda_k - \lambda_{k,\ell}|  \lesssim N^2(\# \mathcal T_{\ell} - \# \mathcal T_{0})^{-2s}
|(\mathsf{W},\mathsf{W}^*)|^2_{\mathcal{A}_s} \quad \text{ for } k\in J.
\end{align}
\end{theorem}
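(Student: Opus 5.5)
The plan is to combine Theorem~\ref{thm:4.5} with the a priori/a posteriori equivalences of Theorem~\ref{thm:3.1}. No new analysis is needed: every quantity on the left-hand sides of \eqref{eqn:3.28}--\eqref{eqn:3.31} has already been bounded in terms of
\[
e_\ell:=\sum_{j\in J}\left(\|(K-K_\ell)u_j\|_V+\|(K_*-K_{\ell*})u_j^*\|_V\right),
\]
and Theorem~\ref{thm:4.5} bounds $e_\ell$ by $(\#\mathcal T_\ell-\#\mathcal T_0)^{-s}|(\mathsf W,\mathsf W^*)|_{\mathcal A_s}$. Throughout we assume, as in Theorem~\ref{thm:4.5}, that $h_0$ is small enough for Theorem~\ref{thm:3.1} and Lemma~\ref{lem:2.3} to apply. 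Condition (C1) or (C2) is needed only to have these lemmas available; note that Theorem~\ref{thm:3.1} uses the fixed orthonormal basis $\{u_j\}$ of $R(E)$.

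First, for \eqref{eqn:3.28}, we take the left inequality in \eqref{eqn:3.3}:
\[
\widehat\delta(R(E),R(E_\ell))\lesssim \sum_{j\in J}\|(K-K_\ell)u_j\|_V\le e_\ell .
\]
Inserting \eqref{eqn:4.2} then gives the claim. The bound \eqref{eqn:3.29} follows the same way from the left inequality in \eqref{eqn:3.4}.

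Second, for \eqref{eqn:3.30}, we start from \eqref{eqn:3.5}. Since all terms are nonnegative,
\[
\sum_{j\in J}\left(\|(K-K_\ell)u_j\|_V^2+\|(K_*-K_{\ell*})u_j^*\|_V^2\right)\le e_\ell^2 .
\]
Squaring \eqref{eqn:4.2} and substituting yields
\[
|\widehat\lambda-\widehat\lambda_\ell|\lesssim N^2(\#\mathcal T_\ell-\#\mathcal T_0)^{-2s}|(\mathsf W,\mathsf W^*)|_{\mathcal A_s}^2 .
\]
In the non-defective case, \eqref{eqn:3.31} follows in exactly the same way from \eqref{eqn:3.5a}.

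There is no real obstacle here. The one point to watch is the dependence of the constants on $N$. The hidden constants in \eqref{eqn:3.3}--\eqref{eqn:3.5a} must be independent of the mesh. This holds because they come from Lemma~\ref{lem:2.3} and from Lemma~3.4 and Theorem~2.2 of \cite{yang2024}, valid for $h_0$ small. The explicit factor $N^2$ is simply carried along. The hidden constant in \eqref{eqn:3.30} therefore absorbs only quantities depending on $\Omega$, $\mathcal T_0$ and the coefficients, consistent with the convention of Section~\ref{sec1}.
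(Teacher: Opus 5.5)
Your proposal is correct and follows the paper's proof: the paper likewise obtains \eqref{eqn:3.28}--\eqref{eqn:3.31} by inserting \eqref{eqn:4.2} into \eqref{eqn:3.3}, \eqref{eqn:3.4}, \eqref{eqn:3.5} and \eqref{eqn:3.5a}, respectively. You also spell out the step the paper leaves implicit, namely bounding the sum of squared errors by $e_\ell^2$ before squaring \eqref{eqn:4.2}.
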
 

\begin{proof}
Combining (\ref{eqn:3.3}) with (\ref{eqn:4.2}), we obtain (\ref{eqn:3.28}).
Combining (\ref{eqn:3.4}) with (\ref{eqn:4.2}), we obtain (\ref{eqn:3.29}).
Combining (\ref{eqn:3.5}) with (\ref{eqn:4.2}), we obtain (\ref{eqn:3.30}). 
When the eigenvalue cluster $\{\lambda_{j}\}_{j\in J}$ is composed of non-defective eigenvalues, thanks to (\ref{eqn:3.5a}) and (\ref{eqn:4.2}), we can obtain (\ref{eqn:3.31}).
\end{proof}

\section{Numerical experiments}\label{sec5}
In this section, we will present a numerical example to support the theoretical analysis.
Our program is completed using the Python package of scikit-fem \cite{skfem2020}. \\
\indent We consider the Kellogg problem, with the domain $\Omega=\bigcup_{i=1}^4\Omega_i$ where $\Omega_{1}=(0,1)^{2}$, $\Omega_{2}=(-1, 0)\times (0,1)$, $\Omega_{3}=(-1, 0)^{2}$, $\Omega_{4}=(0,1)\times (-1,0)$. The operator $\mathcal{L} v=-\nabla\cdot(\boldsymbol{A} v)+\boldsymbol{b} \cdot \nabla v$ is defined with  $\boldsymbol{b}=(2,2)$, where $\boldsymbol{A}=10\boldsymbol{I}$ in $\Omega_1$ and $\Omega_3$, and $\boldsymbol{A}=\boldsymbol{I}$ in $\Omega_2$ and $\Omega_4$.\\
\indent From \cite{giani2016} we know the first eigenvalue $17.7143168$ with an accuracy of $10^{-6}$.
The first twelve ``exact" eigenvalues
\begin{align*}
&\lambda_1 = 17.714 316 836 537, \lambda_2 = 20.741 585 348 761, \lambda_3 = 37.145 042 894 655,\\ 
&\lambda_4 = 43.608 009 384 122,
\lambda_5 = 48.640 297 883 881, 
\lambda_6 = 49.129 389 042 157, \\
&\lambda_7 = 63.720 910 445 531, 
\lambda_8 = 69.110 565 445 000,
\lambda_9 = 77.939 634 255 303,\\ 
&\lambda_{10} = 78.541 679 776 972, 
\lambda_{11} = 94.585 833 879 139,
\lambda_{12} = 94.921 224 922 705
\end{align*}
are obtained by using the $P_3$ element on an adaptively refined mesh by Algorithm~1 with $dof>10^{5}$ where the initial mesh is shown in Fig.~\ref{fig1b} (left) and an adaptively refined mesh is shown in Fig.~\ref{fig1b} (right).\\
\indent For the bulk parameter $\theta\in\{0.25,0.5,0.75,1.0\}$, the curves of global error estimators $\sum_{j\in J}(\eta_{\ell}(u_{j,\ell}, \mathcal T_\ell)^2 + \eta_{\ell}^*(u^*_{j,\ell}, \mathcal T_\ell)^2)$ are depicted on Fig.~\ref{fig2b} for the clustered eigenvalues $\{\lambda_{j,\ell}\}_{j\in J}$ ($n=0$ and $N=12$) by using the $P_p$ ($p=1,2,3$) element.
It is shown that $\sum_{j\in J}(\eta_{\ell}(u_{j,\ell}, \mathcal T_\ell)^2 + \eta_{\ell}^*(u^*_{j,\ell}, \mathcal T_\ell)^2)$ is cluster-robust and reliable.

We depict the error curves of the approximate eigenvalue cluster $\{\lambda_{j,\ell}\}_{j\in J}$ by using the $P_p$ ($p=1,2,3$) element in Figs.~\ref{fig3b}--\ref{fig5b}, from which it is shown that the first twelve approximate eigenvalues are non-defective, as supported by the numerical evidence in \cite{gasser2019}, and that the error curves are approximately parallel to the line with slope $-p$ ($p=1,2,3$).
The errors of the cluster reach optimal convergence rates. 
The numerical results are consistent with the theoretical results.

Different eigenvalues in the cluster $\{\lambda_{j,\ell}\}_{j\in J}$ exhibit different numerical sensitivity and digits. Although the trends of the errors of the first two approximate eigenvalues are unstable with respect to the values of $\theta$, the trends of the errors of the rest eigenvalues are stable in Figs.~\ref{fig3b}--\ref{fig5b}.

\begin{figure*}
\centering
\includegraphics[width=1.5in]{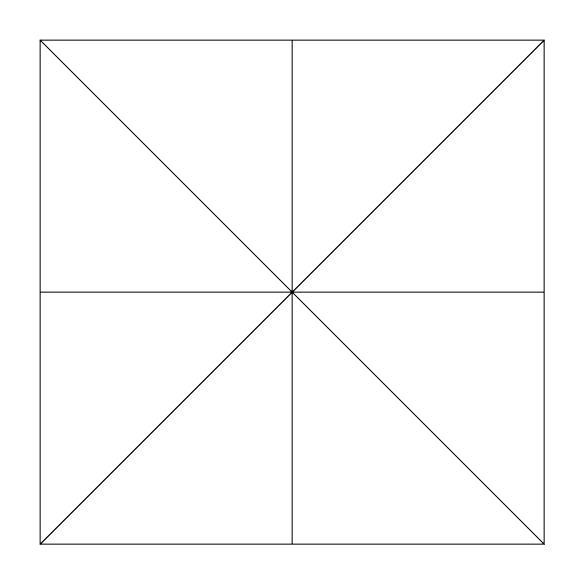}
\includegraphics[width=1.5in]{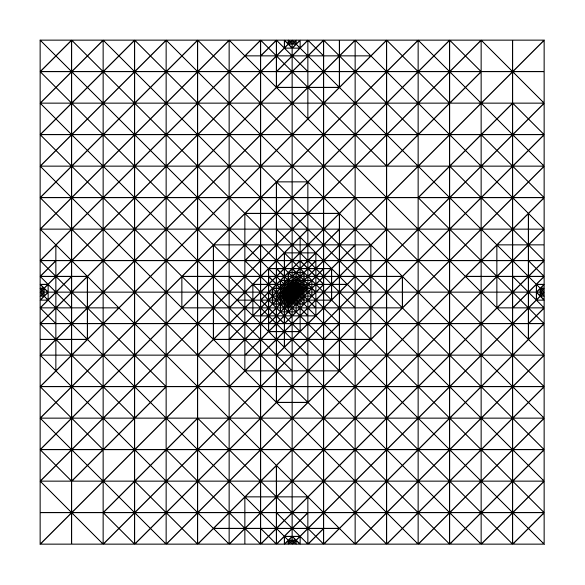}
\label{fig1b}
\caption{The initial mesh (left) and adaptively refined mesh by using the $P_{3}$ element (right) with the bulk parameter $\theta=0.5$.}
\end{figure*}

\begin{figure*}
\centering
\includegraphics[width=2.0in]{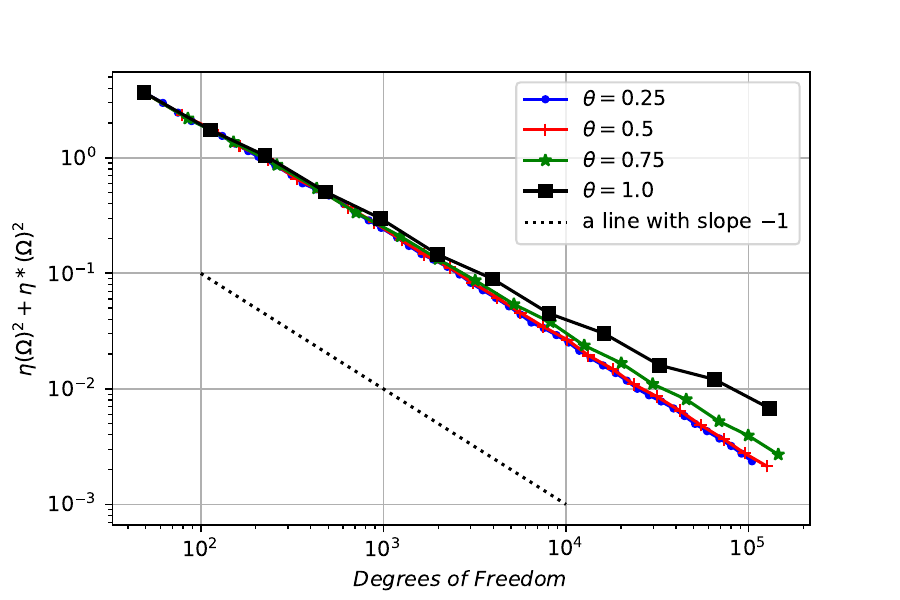}
\includegraphics[width=2.0in]{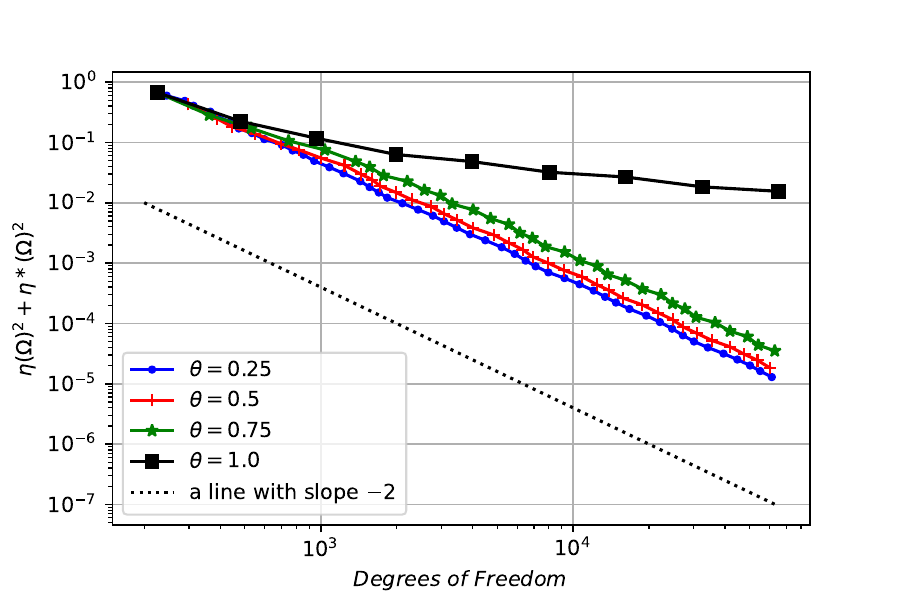}
\includegraphics[width=2.0in]{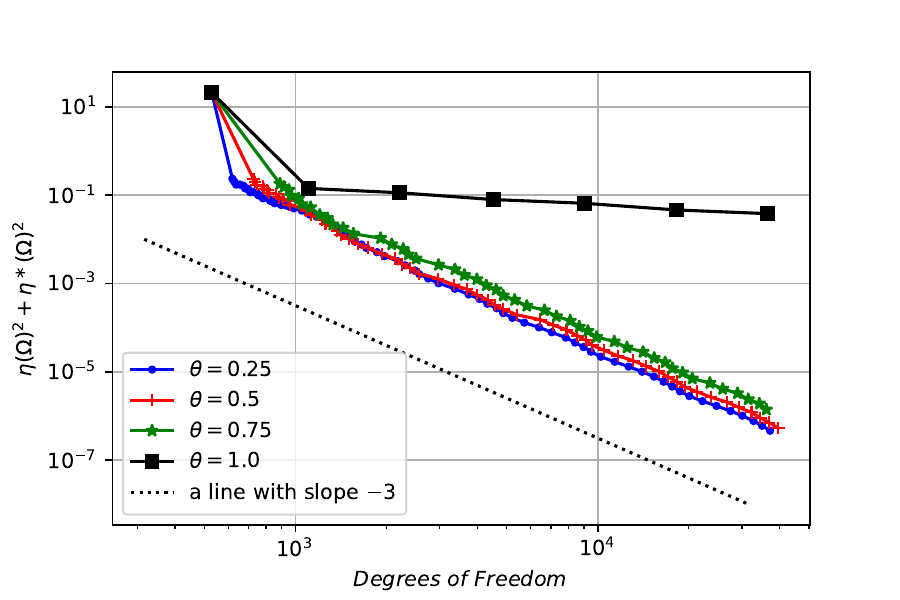}
\caption{\label{fig2b} The global error estimators $\sum_{j\in J}(\eta_{\ell}(u_{j,\ell}, \mathcal T_\ell)^2 + \eta_{\ell}^*(u^*_{j,\ell}, \mathcal T_\ell)^2)$ for the clustered eigenvalues $\{\lambda_{j,\ell}\}_{j\in J}$ ($n=0$ and $N=12$) by using the $P_1$ element (top left), the $P_2$ element (top right), and the $P_3$ element (bottom), respectively.}
\end{figure*}

\begin{figure*}
\centering
\includegraphics[width=2.0in]{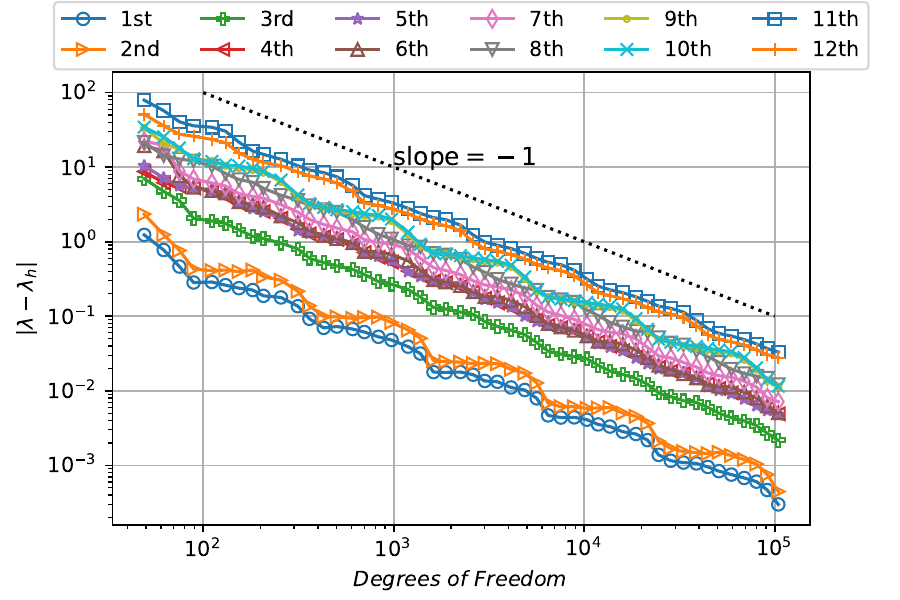}
\includegraphics[width=2.0in]{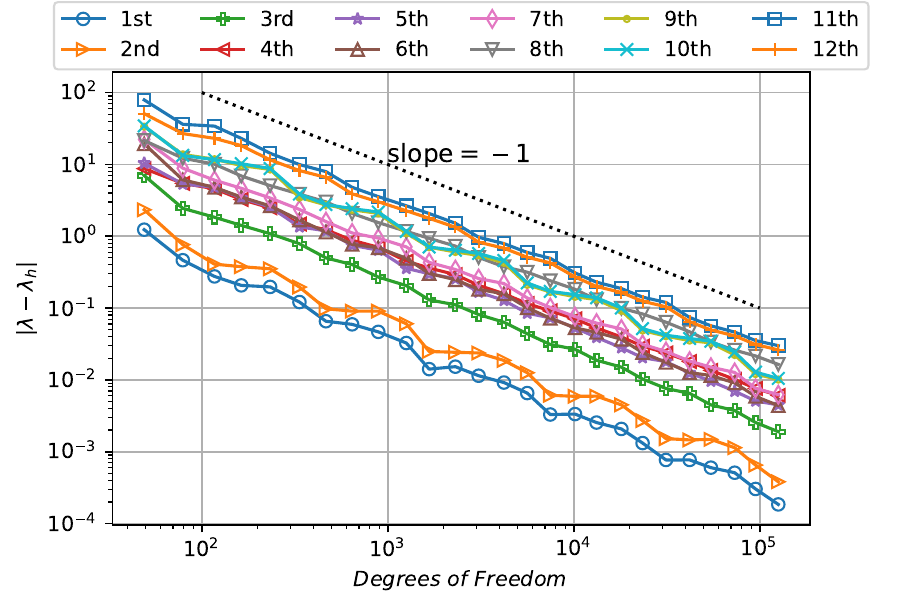}
\includegraphics[width=2.0in]{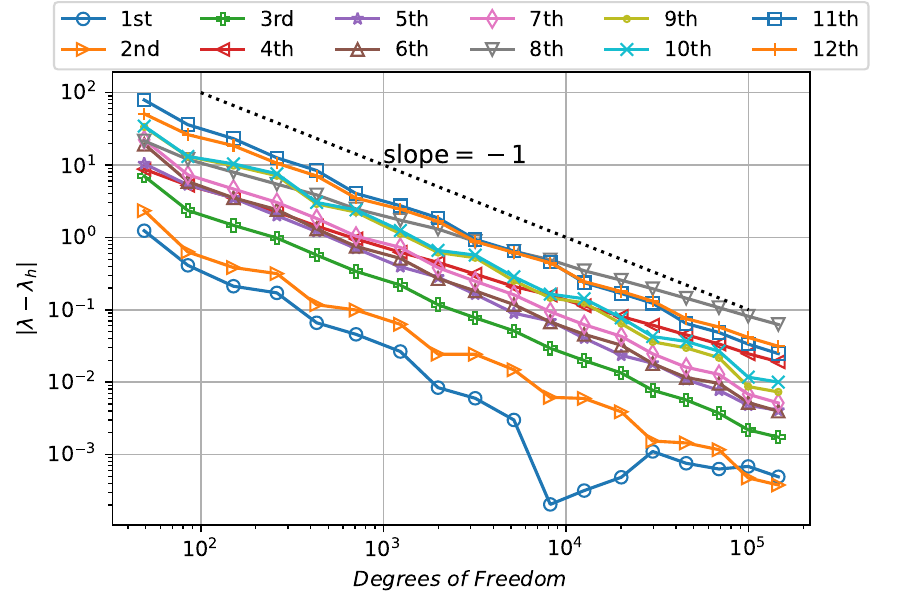}
\includegraphics[width=2.0in]{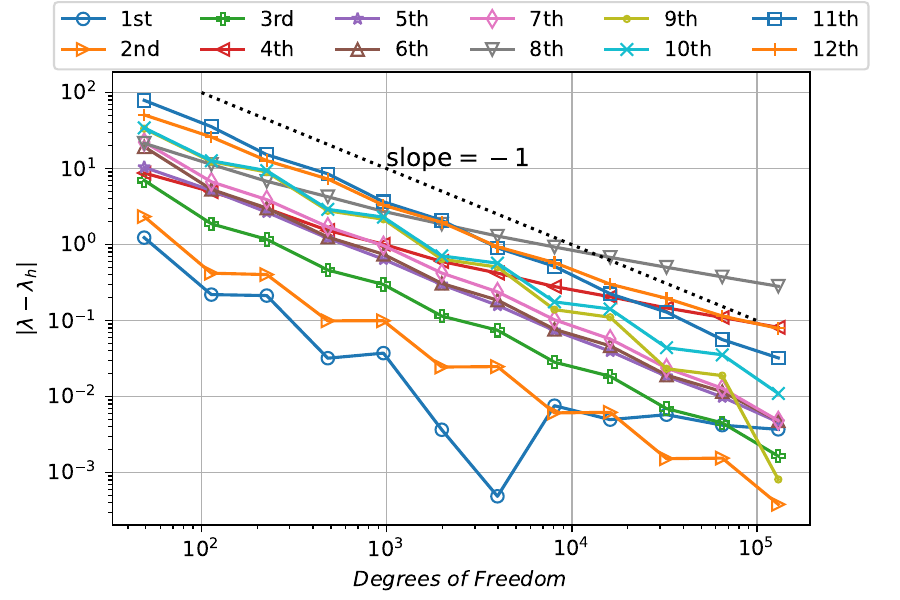}
\caption{\label{fig3b}The error curves of the clustered eigenvalues $\{\lambda_{j,\ell}\}_{j\in J}$ ($n=0$ and $N=12$) on adaptively refined meshes obtained by the $P_1$ element when the bulk parameters $\theta=0.25$ (top left),  $\theta=0.5$ (top right),  $\theta=0.75$ (bottom left), and $\theta=1$ (bottom right), respectively.}
\end{figure*}

\begin{figure*}
\centering
\includegraphics[width=2.0in]{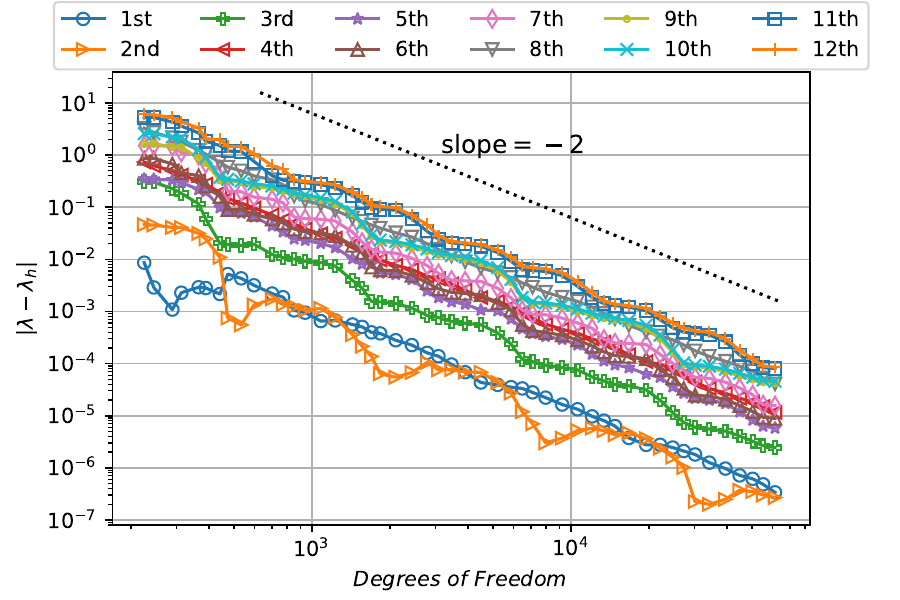}
\includegraphics[width=2.0in]{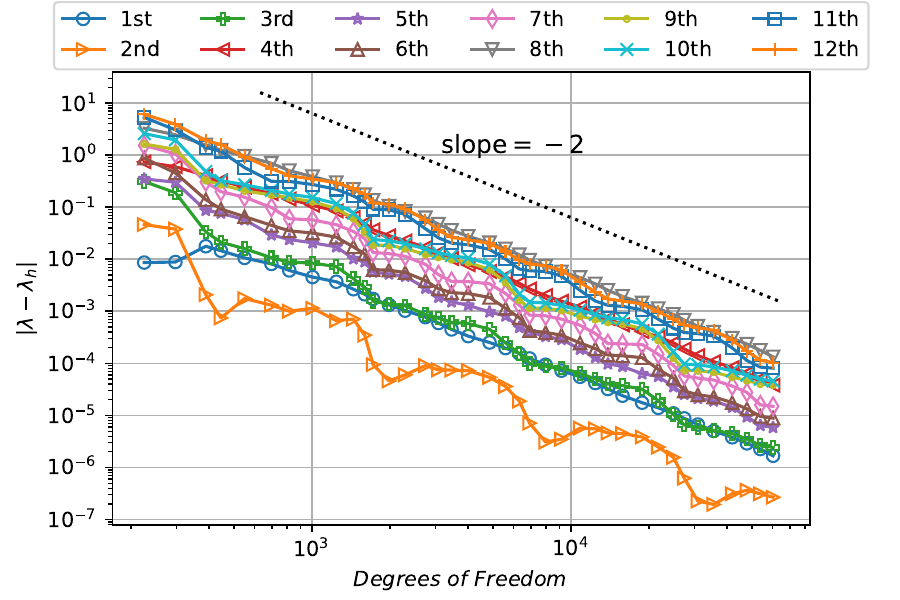}
\includegraphics[width=2.0in]{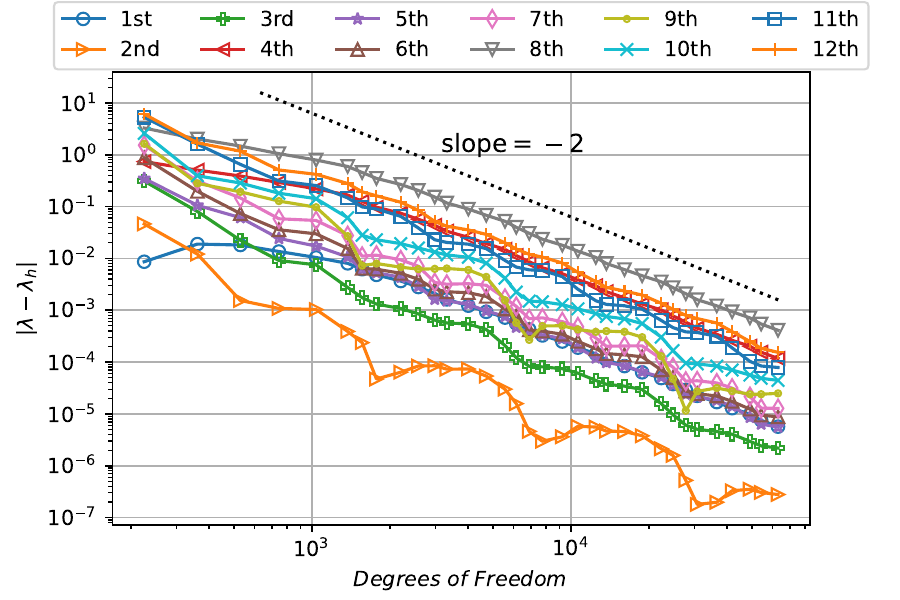}
\includegraphics[width=2.0in]{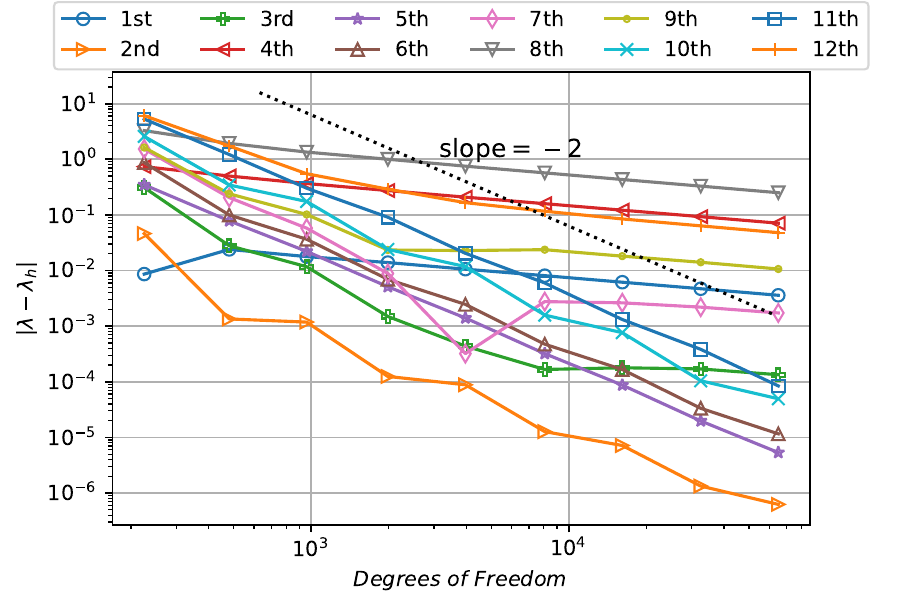}
\caption{\label{fig4b}The error curves of the clustered eigenvalues $\{\lambda_{j,\ell}\}_{j\in J}$ ($n=0$ and $N=12$) on adaptively refined meshes obtained by the $P_2$ element when the bulk parameters $\theta=0.25$ (top left),  $\theta=0.5$ (top right),  $\theta=0.75$ (bottom left), and $\theta=1$ (bottom right), respectively.}
\end{figure*}

\begin{figure*}
\centering
\includegraphics[width=2.0in]{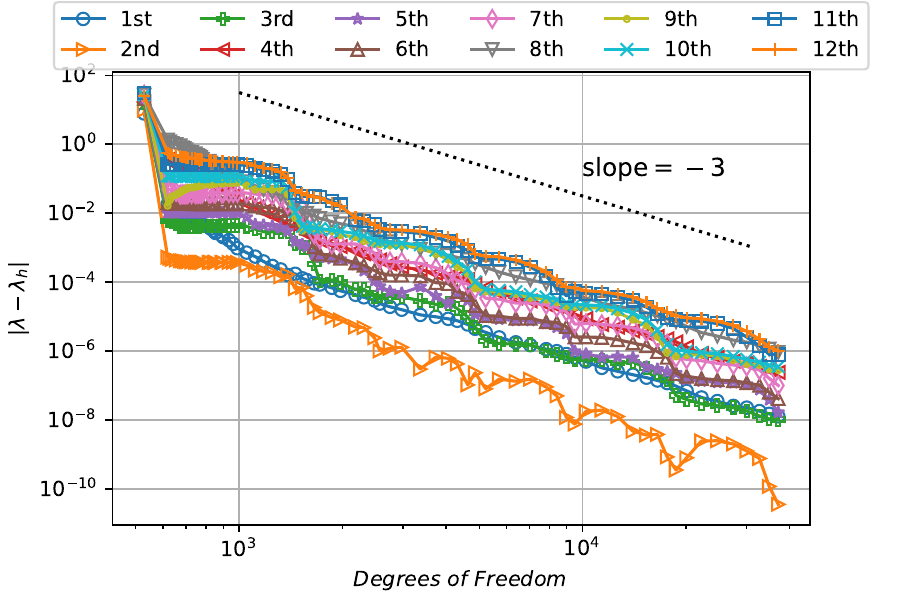}
\includegraphics[width=2.0in]{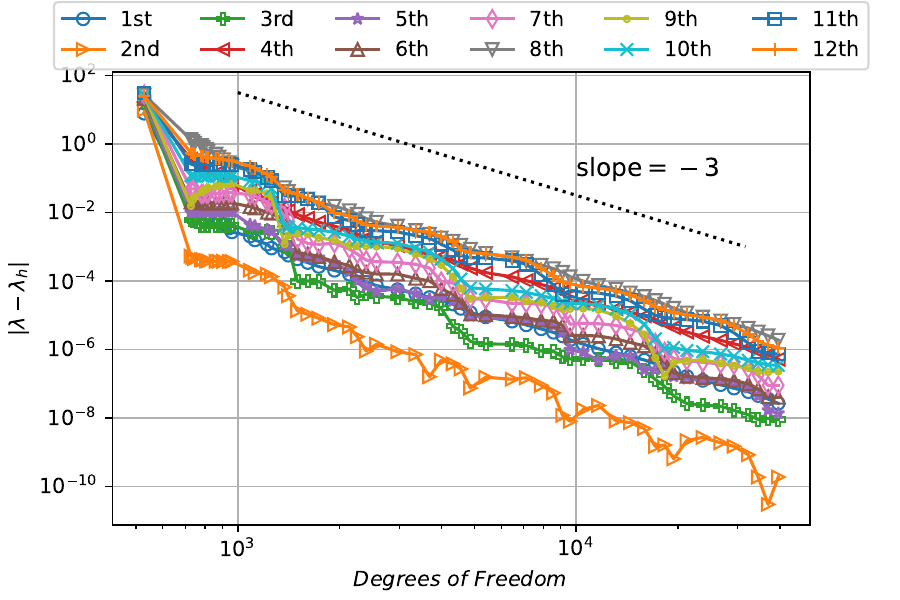}
\includegraphics[width=2.0in]{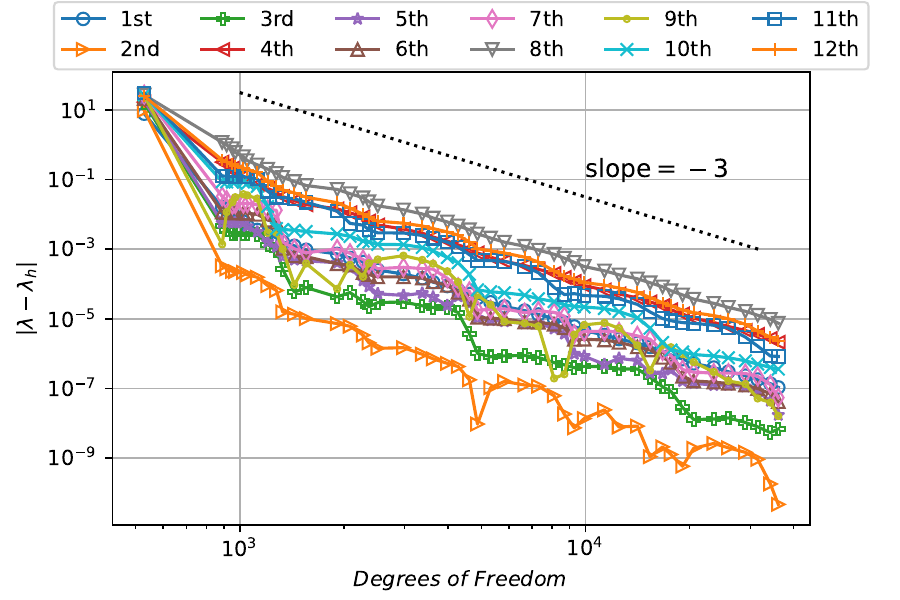}
\includegraphics[width=2.0in]{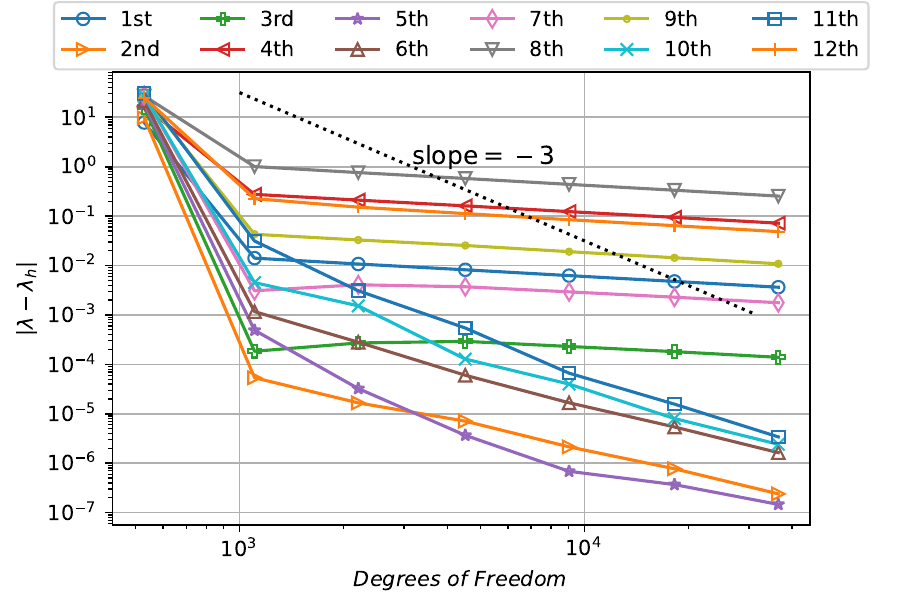}
\caption{\label{fig5b} The error curves of the clustered eigenvalues  $\{\lambda_{j,\ell}\}_{j\in J}$ ($n=0$ and $N=12$)  on adaptively refined meshes obtained by the $P_3$ element when the bulk parameters $\theta=0.25$ (top left),  $\theta=0.5$ (top right),  $\theta=0.75$ (bottom left), and $\theta=1$ (bottom right), respectively.}
\end{figure*}

\section{Conclusion comments}\label{sec:6}
In this paper, we first prove the optimal convergence of adaptive finite element methods of non-self-adjoint eigenvalue problems for the eigenvalue cluster.
The proof technique differs from those used for the optimal convergence of adaptive finite element methods for self-adjoint eigenvalue problems.
We reduce the a posteriori error estimates for approximate eigenvalues and approximate eigenspaces for eigenvalue problems to the a posteriori error estimates of the corresponding source problems. 
Thus the optimality results (quasi-orthogonality, contraction, and discrete reliability) for the corresponding source problems can be applied to analyzing the optimal convergence of AFEM for the eigenvalue problems.

The study in this paper is closely related to the work of \cite{feischl2014}.
By using the Gårding inequality $|a(u, u)| + C_{\operatorname{gard}} \|u\|_{0}^2 \geq \rho_{\operatorname{gard}} \|\nabla u\|_{0}^2\quad \forall u\in V$ in place of the coercivity condition (\ref{coercive})
and applying the results in Section~6.4 of \cite{feischl2014},
the conclusions of this paper remain valid for such a generalization.

\section*{Data availability}
Data will be made available on request.

\section*{Declaration of competing interest}
The authors declare that they have no known competing financial interests or personal relationships that could have appeared to influence the work reported in this paper.


\end{document}